\documentclass[letterpaper, 11pt,  reqno]{amsart}

\usepackage{amsmath,amssymb,amscd,amsthm,amsxtra, esint}

\headheight=8pt
\topmargin=0pt
\textheight=624pt
\textwidth=432pt
\oddsidemargin=18pt
\evensidemargin=18pt

\allowdisplaybreaks[2]

\sloppy

\hfuzz  = 0.5cm 


\setlength{\pdfpagewidth}{8.50in}
\setlength{\pdfpageheight}{11.00in}

\newtheorem{theorem}{Theorem} [section]

\newtheorem{lemma}[theorem]{Lemma}
\newtheorem{proposition}[theorem]{Proposition}
\newtheorem{remark}[theorem]{Remark}


\DeclareMathOperator*{\supp}{supp}

\newcommand{\I}{\hspace{0.5mm}\text{I}\hspace{0.5mm}}
\newcommand{\II}{\text{I \hspace{-2.8mm} I} }

\newcommand{\noi}{\noindent}
\newcommand{\Z}{\mathbb{Z}}
\newcommand{\R}{\mathbb{R}}
\newcommand{\C}{\mathbb{C}}
\newcommand{\T}{\mathbb{T}}

\let\Re=\undefined\DeclareMathOperator*{\Re}{Re}
\let\Im=\undefined\DeclareMathOperator*{\Im}{Im}

\let\P= \undefined
\newcommand{\P}{\mathbf{P}}

\newcommand{\E}{\mathbb{E}}

\newcommand{\N}{\mathcal{N}}

\newcommand{\al}{\alpha}

\newcommand{\dl}{\delta}

\newcommand{\nb}{\nabla}

\newcommand{\Dl}{\Delta}
\newcommand{\eps}{\varepsilon}

\newcommand{\g}{\gamma}

\newcommand{\ld}{\lambda}

\newcommand{\Si}{\Sigma}
\newcommand{\ft}{\widehat}

\newcommand{\wt}{\widetilde}
\newcommand{\cj}{\overline}

\newcommand{\dt}{\partial_t}

\renewcommand{\l}{\ell}
\renewcommand{\o}{\omega}
\renewcommand{\O}{\Omega}

\newcommand{\les}{\lesssim}
\newcommand{\ges}{\gtrsim}

\newcommand{\jb}[1]
{\langle #1 \rangle}

\newcommand{\pa}{\partial}
\newcommand{\pP}{\mathbf{P}}
\newcommand{\M}{\mathcal{M}}

\newtheorem*{ackno}{Acknowledgement}

\numberwithin{equation}{section}
\numberwithin{theorem}{section}

\usepackage[english, french]{babel}

\begin{document}
\baselineskip = 14pt

\selectlanguage{english}

\title[Probabilistic GWP of the energy-critical NLW on $\R^3$]
{Probabilistic global well-posedness of the energy-critical
defocusing quintic nonlinear wave equation on  $\R^3$
}

\author[T.~Oh and O.~Pocovnicu]
{Tadahiro Oh and   Oana Pocovnicu}

\address{
Tadahiro Oh\\
School of Mathematics\\
The University of Edinburgh\\
and The Maxwell Institute for the Mathematical Sciences\\
James Clerk Maxwell Building\\
The King's Buildings\\
Peter Guthrie Tait Road\\
Edinburgh\\ 
EH9 3FD\\
 United Kingdom}
 
\email{hiro.oh@ed.ac.uk}

\address{
Oana Pocovnicu\\
Department of Mathematics\\
Princeton University\\
Fine Hall\\
Washington Rd.\\
Princeton\\ NJ 08544-1000\\
USA\\
and 
Department of Mathematics\\
Heriot-Watt University
and The Maxwell Institute for the Mathematical Sciences\\
Edinburgh\\
EH14 4AS\\United Kingdom}

\email{o.pocovnicu@hw.ac.uk}


%
%
\subjclass[2010]{35L05, 35L15, 35L71}

\keywords{nonlinear wave equation; probabilistic well-posedness; 
global existence; Wiener randomization}

\maketitle

\vspace{-10mm}

\begin{abstract}
We prove almost sure global well-posedness of the energy-critical defocusing quintic nonlinear wave equation on $\R^3$
with random initial data in $ H^s(\R^3) \times H^{s-1}(\R^3)$
for $s > \frac 12$.
The main new ingredient is a uniform probabilistic energy bound
for approximating random solutions.
\end{abstract}

\begin{otherlanguage}{french}
\begin{abstract}

On consid\`ere l'\'equation des ondes critique d\'efocalisante dans $\R^3$ \`a don\'ees initiales al\'eatoires
dans $H^s(\R^3) \times H^{s-1}(\R^3)$, avec $s > \frac 12$. 
On \'etablit que ce probl\`eme est globalement bien-pos\'e presque s\^urement. 
Le principal ingredient nouveau de la preuve
est une estimation probabiliste uniforme de l'\'energie des solutions approch\'ees.

\end{abstract}
\end{otherlanguage}


%
%

\baselineskip = 15pt

\section{Introduction}

\subsection{Nonlinear wave equation}

We consider the Cauchy problem for the energy-critical defocusing quintic nonlinear wave equation (NLW) on $\R^3$:
\begin{equation}\label{NLW}
\begin{cases}
\pa_{t}^2 u-\Delta u+u^5=0 
\\
(u,   \pa_t u)|_{t = 0} = (u_0, u_1), 
\end{cases}
\quad \quad (t,x)\in\R\times\R^3, 
\end{equation} 

\noi
where 
$u$ is a real-valued function.
NLW has been studied extensively
from both  applied and theoretical points of view,
in particular in three spatial dimensions
due to its physical importance.
In this paper, we study 
 the global-in-time behavior of solutions to \eqref{NLW} with {\it random} and {\it rough} initial data
 below the energy space.

It is well known that  the quintic NLW  \eqref{NLW} on $\R^3$ is invariant under the following dilation symmetry:
\begin{align}
u(t,x)\mapsto u_{\lambda}(t,x):=\lambda^{\frac{1}{2}} u(\lambda t, \lambda x).
\label{Zscaling}
\end{align}

\noi
Namely, if $u$ is a solution to \eqref{NLW},
then $u_{\lambda}$ is also a solution to \eqref{NLW} with rescaled initial data.
Recall that  the $\dot{H}^1(\R^3)\times L^2(\R^3)$-norm is invariant under this dilation symmetry:
\begin{equation*}
\|(u_\lambda (0), \pa_t u_\lambda(0))
\|_{\dot{H}^1(\R^3)\times L^2(\R^3)}=
\|(u(0),\pa_t u(0))
\|_{\dot{H}^1(\R^3)\times L^2(\R^3)}.
\end{equation*}

\noi
Moreover,
the   conserved energy $E(u)$ defined by 
\begin{equation}
E(u) = E(u, \pa_t u) := \int_{\R^3}\frac 12(\pa_t u)^2+\frac 12|\nabla u|^2+\frac16
u^6 dx
\label{Zenergy}
\end{equation} 

\noi
is also invariant under the dilation symmetry \eqref{Zscaling}.
This explains why the quintic NLW on $\R^3$
is called  {\it energy-critical}.
In view of Sobolev's inequality: 
$\dot{H}^1(\R^3) \subset L^{6}(\R^3)$, 
we see that 
$E(u, \dt u) < \infty$
if and only if 
\[(u, \dt u)\in \dot{H}^1(\R^3)\times L^2(\R^3).\]

\noi
In the following, we refer to $\dot{H}^1(\R^3)\times L^2(\R^3)$ as the energy space.

Let us briefly recall the known results on global well-posedness of 
the defocusing NLW in the energy space.
For an energy-subcritical defocusing  NLW on $\R^3$ with nonlinearity $|u|^{p-1}u$, $p < 5$, 
the conservation of the energy 
allows us to iterate the local-in-time argument
and obtain 
global well-posedness in $\dot H^1(\R^3)\times L^2(\R^3)$.
The energy-critical defocusing quintic NLW \eqref{NLW} on $\R^3$, however, 
lies at a 
rather 
delicate balance of dispersion by the linear evolution
and concentration due to the nonlinearity, 
and the issue of global well-posedness for \eqref{NLW} is more intricate.
After substantial efforts made by many mathematicians, 
it is now known that 
\eqref{NLW} is globally well-posed in the energy space
and all finite energy solutions scatter
 \cite{ Struwe, Grillakis90, Grillakis92, Shatah_Struwe93, 
Shatah_Struwe, Kapitanski, 
Ginibre, Bahouri_Shatah,
Bahouri_Gerard, 
Tao}.
Lastly, recall that 
the energy-critical quintic NLW
\eqref{NLW} on $\R^3$ is known to be ill-posed
below the energy space
\cite{Christ_Colliander_Tao_main}.

Recently, there has been a significant development
in probabilistic construction of local-in-time and global-in-time solutions to 
hyperbolic and dispersive PDEs 
below certain regularity thresholds (such as a scaling critical regularity), 
where the equations are known to be ill-posed deterministically.
In particular, 
following the methodology developed in \cite{Bourgain96, BTI, BOP1, Poc}, 
one can easily prove almost sure local well-posedness of \eqref{NLW} 
below the energy space (Theorem \ref{THM:LWP}).
Therefore, it is natural to study the long time behavior
of such local solutions constructed in a probabilistic manner.

%
%


Our main goal in this paper is to 
prove almost sure
global well-posedness  of 
\eqref{NLW} below the energy space
under suitable randomization of  initial data.
See Theorem \ref{THM:GWP} below.
In particular, this settles 
the question
of almost sure global well-posedness 
for {\it large},  random,  and rough initial data,
in the physically important case of the energy-critical NLW on $\R^3$.
This case was not addressed in the previous works 
on the subject. 
Indeed, previously, 
L\"uhrmann-Mendelson \cite{LM}
proved almost sure global well-posedness
below the scaling critical regularity for energy-subcritical (sub-quintic)  
NLW on $\R^3$.
In the same paper, they also proved
almost sure small data global well-posedness 
for the energy-critical NLW \eqref{NLW} on $\R^3$.
We point out that 
the methods used in \cite{LM} 
are
specific to the energy-subcritical or small data setting
and are not applicable to our problem.
In fact, a new method was needed for
studying the global behavior of solutions to an  energy-critical equation with large random initial data.
Recently, 
the second author \cite{Poc}
successfully implemented such a method
and 
proved almost sure global well-posedness 
below the energy space
for the energy-critical NLW
on $\R^d$, $d=4,5$, 
with large random initial data.
The argument in \cite{Poc}, however,
fails in  the case of $d=3$,
and thus we need to develop
additional new ideas and perform a more intricate analysis
to treat \eqref{NLW} on $\R^3$.

\subsection{Wiener randomization}
In this subsection, we discuss the randomization for functions on $\R^3$
that we employ for our main result.

Following the works of Bourgain \cite{Bourgain96} and Burq-Tzvetkov \cite{BTI}, 
there have been 
many  results on 
probabilistic construction of  solutions to evolution equations 
via randomization of initial data.
On a compact manifold $M$, 
 there is a countable (orthonormal) basis $\{e_n\}_{n\in\mathbb N}$ of $L^2(M)$ consisting of
eigenfunctions of the Laplace-Beltrami operator.
This gives a natural way to introduce a randomization as follows.
Given $u_0=\sum_{n=1}^\infty \ft {u}_n e_n\in H^s(M)$,
we can  define its randomization $u_0^\o$ by
\begin{align}
u_0^\omega :=  \sum_{n=1}^\infty g_n(\omega) \ft u_n e_n,
\label{Ziv}
\end{align}

\noi
where $\{g_n\}_{n\in\mathbb N}$
is a sequence of independent mean zero random variables, satisfying certain moment estimates.
When $M = \T^d$, we can express $u_0^\o$ in \eqref{Ziv} as 
\begin{align}
u_0^\omega
= \Xi(\o) * u_0, 
\label{Ziv0}
\end{align}

\noi
where  $\Xi$ is a random distribution given by\footnote{On $\T^d$, 
we have $e^{2\pi i n\cdot x}  = \phi (D - n) \dl$, $n \in \Z^d$, where $\phi = \chi_{B(0, \frac{1}{2})}$.
Then, $\Xi$ in \eqref{Ziv1a} can be written as 
\begin{align*}
 \Xi(\o) = \sum_{n\in\Z^d} g_n(\omega) \phi(D-n) \dl.
\end{align*}

\noi
Compare this with \eqref{Ziv1b} below.
}  
\begin{align}
 \Xi(\o) = \sum_{n\in\Z^d} g_n(\omega) e^{2\pi i n \cdot x}.
 \label{Ziv1a}
\end{align}

\noi
In particular, if $\{ g_n\}_{n \in \mathbb Z^d}$ is a sequence of 
independent standard Gaussian random variables, 
then  $\Xi$ in \eqref{Ziv1a} corresponds to the (mean zero Gaussian) white noise on $\T^d$.
In this case, we can call the randomization $u_0^\o$ given by \eqref{Ziv} and \eqref{Ziv0}
the white noise randomization of $u_0$.
See Remark \ref{REM:white} below.

On the Euclidean space $\R^d$, however,
 there is no countable
basis of $L^2(\R^d)$ consisting of eigenfunctions of the Laplacian
and thus there is no `natural' way to introduce a randomization 
of functions
as in \eqref{Ziv}.
Randomizations for functions on $\R^d$ have been considered
 with respect to some other countable bases of $L^2(\R^d)$
such as a countable basis of the eigenfunctions of the Laplacian with a confining potential, 
for example,  the harmonic oscillator
 $-\Delta + |x|^2$,   \cite{Thomann, BTT}.
In the following, however, 
we consider a simple randomization for functions on $\R^d$,
naturally associated to the Wiener decomposition of the frequency space $\R^d_\xi$.
See also \cite{LM, BOP1, BOP2}.

Let $Q_n$ be  the unit cube $Q_n:=n+\big[-\frac 12, \frac 12 \big)^d$ centered at $n\in \Z^d$.
For simplicity, we set $Q := Q_0$.
The Wiener decomposition \cite{W}
of the frequency space $\R^d_\xi$
is given by 
 the uniform partition:
 $\R^d = \bigcup_{n \in \Z^d} Q_n$.
Clearly, given a function $u$ on $\R^d$, we have 
\begin{equation}
 u = \sum_{n \in \Z^d} \chi_{Q_n} (D) u 
=  \sum_{n \in \Z^d} \chi_{Q} (D - n) u. 
\label{Wiener}
\end{equation}

\noi
Here, $ \chi_{Q_n} (D) $ denotes the Fourier multiplier operator
with symbol $\chi_{Q_n}$.

Next, we consider the smoothed version of the decomposition \eqref{Wiener}.
Let $\psi \in \mathcal{S}(\R^d)$ 
be such that $\supp \psi \subset [-1, 1]^d$,  $\psi(-\xi ) = \overline{\psi(\xi)}$,
and 
\[ \sum_{n \in \Z^d} \psi(\xi - n) \equiv 1 \quad \text{for all }\xi \in \R^d.\]

\noi
Then, any function $u$ on $\R^d$ can be written as
\begin{equation}
 u = \sum_{n \in \Z^d} \psi(D-n) u,
\label{Ziv2}
 \end{equation}

\noi
where $ \psi (D-n) $ denotes the Fourier multiplier operator
with symbol $\psi (\,\cdot\, -n)$.

We now introduce  a randomization 
adapted to the uniform decomposition \eqref{Ziv2}.
For $j = 0, 1$, 
let $\{g_{n,j}\}_{n \in \Z^d}$ be a sequence of mean zero complex-valued random variables
on a probability space $(\Omega, \mathcal{F}, P)$
such that $g_{-n,j}=\overline{g_{n,j}}$
for all $n\in\Z^d$, $j=0,1$.
In particular,  $g_{0, j}$ is real-valued.
Moreover, we  assume that 
$\{g_{0,j}, \Re g_{n,j}, \Im g_{n,j}\}_{n\in\mathcal I, j=0,1}$ are independent,
where the index set $\mathcal{I}$ is defined by 
\begin{equation}
\mathcal I:=\bigcup_{k=0}^{d-1} \Z^k\times \Z_{+}\times \{0\}^{d-k-1}.
\label{Index}
\end{equation}

\noi
Note that $\Z^d = \mathcal I \cup (-\mathcal I)\cup \{0\}$.
Then, given a pair $(u_0, u_1)$ of  functions on $\R^d$, 
we  define the \emph{Wiener randomization} $(u_0^\omega, u_1^\omega)$
of $(u_0,u_1)$ by
\begin{align}
(u_0^\omega, u_1^\omega) : & = 
(\Xi_0 (\o) * u_0,\,  \Xi_1(\o)*u_1) \notag \\
& = 
\bigg(\sum_{n \in \Z^d} g_{n,0} (\omega) \psi(D-n) u_0,
\sum_{n \in \Z^d} g_{n,1} (\omega) \psi(D-n) u_1\bigg).
\label{R1}
\end{align}

\noi
Here, $\Xi_0$ and $\Xi_1$ are random distributions given 
by 
\begin{align}
 \Xi_j(\o) = \sum_{n\in \Z^d} g_{n, j} (\omega) \psi (D-n) \dl, \ \ j = 0, 1, 
 \label{Ziv1b}
\end{align}

\noi
where $\dl$ denotes the Dirac delta distribution.
Note that, if $u_0$ and $u_1$ are real-valued,
then their randomizations $u_0^\omega$ and $u_1^\omega$ defined in \eqref{R1}
are also real-valued.

We make the following assumption on the 
 probability distributions
$\mu_{n,j}$ for $g_{n, j}$;
 there exists $c>0$ such that
\begin{equation}
\int e^{\gamma \cdot x}d\mu_{n,j}(x)\leq e^{c|\gamma|^2}, \quad j = 0, 1, 
\label{cond}
\end{equation}
	
\noindent
for all $n \in \Z^d$,  
(i) all $\gamma \in \R$ when $n = 0$,
and (ii)  all $\g \in \R^2$ when $n \in \Z^d \setminus \{0\}$.
Note that \eqref{cond} is satisfied by
standard complex-valued Gaussian random variables,
standard Bernoulli random variables,
and any random variables with compactly supported distributions.

It is easy to see that, if $(u_0,u_1) \in H^s(\R^d)\times H^{s-1}(\R^d)$
for some $s \in \R$,
then  the Wiener randomization $(u_0^\omega, u_1^\omega)$ is
almost surely  in $H^s(\R^d)\times H^{s-1}(\R^d)$. 
Note that, under some non-degeneracy condition on the random variables $\{g_{n, j}\}$, 
 there is almost surely no gain from randomization
in terms of differentiability (see, for example, Lemma B.1 in \cite{BTI}).
Instead, the main feature of 
the Wiener randomization 
 \eqref{R1}
is that $(u_0^\omega, u_1^\omega)$  behaves better in terms of integrability.
More precisely, if $u_j \in L^2(\R^d)$, $j=0,1$,
then  the randomized function $u_j^\omega$ is
almost surely  in $L^p(\R^d)$ for any finite $p \geq 2$.
See \cite{BOP1}.
It is this improved integrability that allows us to construct global solutions to \eqref{NLW} 
below the energy space in a probabilistic manner.

\begin{remark}\rm 
The uniform decomposition \eqref{Ziv2}
comes from the modulation symmetry (of $L^2(\R^d)$), i.e.~the translation symmetry on the Fourier side.
As such, the uniform decomposition \eqref{Ziv2}
and the Wiener randomization \eqref{R1}
are closely related to the modulation spaces.
See \cite{BOP1} for  more  discussion on this issue.

\end{remark}

\begin{remark}\label{REM:white}\rm
Let  $M = \T^d$.
In this case,  if $\{ g_n\}_{n \in \mathbb \Z^d}$ is a sequence of 
independent standard Gaussian random variables, 
then 
  $\Xi$ in \eqref{Ziv1a} represents the  white noise on $\T^d$
  and the randomization  
  \eqref{Ziv0}
  gives the white noise randomization
  for functions defined on $\T^d$.
Given $L > 0$, 
let  $\Xi_L$ be the white noise on 
$\T_L ^d: = (\R / L\Z)^d \simeq[-\frac{L}{2}, \frac L2)^d$
defined by 
 \[ \Xi_L(x; \o) = \sum_{n \in \mathbb{Z}^d} \frac{g_n(\o)}{L^\frac{d}{2}} e^{2\pi i \frac {n }{L}\cdot x}.\]

\noi
Then, we can also consider the white noise randomization 
$u_0^\omega
= \Xi_L * u_0$ for functions on $\T_L^d$.
One of the main features of this randomization is 
the gain in integrability;
if $u_0 \in L^2(\T_L^d)$, 
then  the randomized function $u_0^\omega$ is
almost surely  in $L^p(\T_L^d)$ for any finite $p \geq 2$.
As mentioned above, 
 this improved integrability also holds 
 for 
the Wiener randomization \eqref{R1}
for functions on $\R^d$.

Given  a function $u_0$ on $\R^d$,
 one may be tempted to consider an analogous white noise randomization $u_0^\o := \Xi_{\R^d} * u_0$
 on $\R^d$,
 where $\Xi_{\R^d}$ is the white noise on $\R^d$
 obtained as the limit\footnote{As $L \to \infty$, 
 $\Xi_L$ converges in distribution to the white noise $\Xi_{\R^d}$ 
on $C^{0, s}_\text{loc} (\R^d; \C)$, $s = -\frac{d}{2}-$, viewed as a Fr\'echet space 
endowed with the metric:
\[ d(f, g) = \sum_{k = 1}^\infty \frac{1}{2^k}
\frac{\|f-g\|_{ C^{0, s}([-k, k])}}{1+\|f-g\|_{ C^{0, s}([-k, k])}}.\]

\noi
This can be seen from the corresponding convergence (in distribution) of the 
periodized Brownian motion on $\T^d$ (represented by the Fourier-Wiener series) to the Brownian motion on $\R^d$
in $C^{0, s}_\text{loc} (\R^d; \C)$ with  $s = 1-\frac{d}{2}-$.
}  of $\Xi_L$ as $L \to \infty$.
 Such a randomization, however, 
is not suitable for our problem due to the lack of (global) integrability. 
 For example, given $u_0 \in L^2(\R^d)$,
it follows from $\mathbb{E} \big[ \Xi_{\R^d}(x) \cj{\Xi_{\R^d}(y)} \big] = \dl(x - y)$
that 
 \begin{align*}
 \mathbb{E}\big[ \| \Xi_{\R^d} * u_0\|_{L^2(\R^d)}^2\big]
& = \int \mathbb{E} \bigg[\int \Xi_{\R^d}(x-y) u_0(y) dy 
\cj {\int \Xi_{\R^d}(x-z) u_0(z) dz } \bigg] \, dx\\
& = \int \| u_0\|_{L^2(\R^d)}^2  dx
= \infty.
 \end{align*}

This shows that while the white noise randomization is useful in studying evolution equations on $\T^d$,
it is not suitable  on $\R^d$, at least for our problem.
The Wiener randomization discussed above can be regarded
as a suitable adaptation of the white noise randomization on $\R^d$,
but on a fixed scale.
See \cite{BOP2} for the effect of the Wiener randomization
based on dilated cubes.
\end{remark}

\subsection{Main result}
Our main goal in this paper
is to prove almost sure global well-posedness of \eqref{NLW} on $\R^3$
below the energy space (Theorem \ref{THM:GWP}).

We use the following shorthand notations
for products of  Sobolev spaces:
\[\mathcal{H}^s(\R^3) : = H^s(\R^3)\times H^{s-1}(\R^3)
\quad \text{and}\quad
\dot{\mathcal{H}}^s(\R^3) : = \dot{H}^s(\R^3)\times \dot{H}^{s-1}(\R^3).
\]

\noi
We also denote by $S(t)$ the propagator for the linear wave equation
given by 
\begin{equation}\label{Zlinear}
S(t)\left(u_0, u_1\right):=\cos(t|\nabla|)u_0+\frac{\sin (t|\nabla|)}{|\nabla|}u_1.
\end{equation}

\noi
We  first present  the following result on almost sure local well-posedness of \eqref{NLW} below the energy space.

\begin{theorem}[Almost sure local well-posedness]
\label{THM:LWP}
Let $ s \in [0, 1)$. 
Given  $(u_0, u_1) \in \mathcal{H}^s(\R^3)$, 
let $(u_0^\omega, u_1^\omega)$
be the Wiener randomization defined in \eqref{R1},
satisfying \eqref{cond}. 
Then, the energy-critical defocusing quintic NLW \eqref{NLW}
on $\R^3$ is almost surely locally well-posed
with  respect to the Wiener randomization $(u_0^\omega,u_1^\omega)$
as initial data.
More precisely, there exist $C,c,\gamma>0$ such that for each $T\ll1$,
there exists a set $\Omega_T\subset \Omega$ with the following properties:

\medskip
\noindent
{\rm(i)} $P(\Omega_T^c)<C\exp(-\frac{c}{T^\gamma})$.

\medskip
\noindent
{\rm(ii)} For each $\omega\in \Omega_T$, there exists a unique solution $u^\omega$ to \eqref{NLW}
with $(u^\omega, \pa_t u^\omega)|_{t = 0}
=(u_0^\omega, u_1^\omega)$ in the class:
\begin{align*}
\big(S(t)(u_0^\omega, u_1^\omega), \dt S(t)(u_0^\omega, u_1^\omega)\big)
+C([-T,T]; \mathcal{H}^1(\R^3))
\subset C([-T,T]; \mathcal{H}^s(\R^3)).
\end{align*}

\noi
Here,  uniqueness  holds
in a ball centered at 
$S(\cdot)(u_0^\omega, u_1^\omega)$ 
in 
\[C([-T,T]; \dot{H}^1(\R^3))\cap 
L^{5}([-T,T]; L^{10}(\R^3)).\]

\end{theorem}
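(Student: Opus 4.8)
The plan is to run the by-now-standard first-order expansion of Bourgain \cite{Bourgain96} and Burq--Tzvetkov \cite{BTI}, fed by the probabilistic Strichartz estimates adapted to the Wiener randomization \eqref{R1} (cf.~\cite{BOP1, LM, Poc}). First, I would observe that under \eqref{cond} the randomized data $(u_0^\o, u_1^\o)$ belongs to $\mathcal H^s(\R^3)$ almost surely, so that the free evolution $z = z^\o := S(t)(u_0^\o, u_1^\o)$ given by \eqref{Zlinear} lies in $C(\R; \mathcal H^s(\R^3))$ almost surely. Seeking a solution of the form $u = z + v$, the remainder $v$ has to solve $\pa_t^2 v - \Dl v + (v + z)^5 = 0$ with $(v, \dt v)|_{t = 0} = (0, 0)$, equivalently the fixed-point equation $v = \G v$ with
\begin{equation*}
\G v(t) := - \int_0^t \frac{\sin\big((t - t') |\nb|\big)}{|\nb|} \big((v + z)^5\big)(t') \, dt'.
\end{equation*}
It then suffices to produce, on a set $\O_T \subset \O$ of suitably large probability, a fixed point of $\G$ in a ball of
\begin{equation*}
X_T := C\big([-T, T]; \dot H^1(\R^3)\big) \cap L^5\big([-T, T]; L^{10}(\R^3)\big);
\end{equation*}
indeed, $u = z + v$ then belongs to the asserted class, and the crude bounds $\big\| \tfrac{\sin(\tau |\nb|)}{|\nb|} \big\|_{L^2 \to L^2} \le |\tau|$ and $\|\cos(\tau |\nb|)\|_{L^2 \to L^2} \le 1$ upgrade the remainder to $(v, \dt v) \in C([-T, T]; \mathcal H^1(\R^3))$, whence $u \in C([-T, T]; \mathcal H^s(\R^3))$ since $s \le 1$.

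Two estimates drive the argument. On the deterministic side, the homogeneous Strichartz estimate for the three-dimensional wave equation --- $(5, 10)$ being wave-admissible for $\dot H^1(\R^3)$ --- together with Minkowski's inequality to treat the retarded integral, yields for $T \le 1$
\begin{equation*}
\|\G v\|_{X_T} + \sup_{|t| \le T} \big\| \big(\G v(t), \dt \G v(t)\big) \big\|_{\mathcal H^1(\R^3)} \les \big\| (v + z)^5 \big\|_{L^1([-T, T]; L^2(\R^3))} = \|v + z\|_{L^5([-T, T]; L^{10}(\R^3))}^5 .
\end{equation*}
On the probabilistic side --- the sole place randomness enters --- I would invoke the probabilistic Strichartz estimate for the Wiener randomization: combining the Khintchine-type inequality for the random series (granted by \eqref{cond} and the independence structure of the $g_{n, j}$), Minkowski's inequality, the unit-scale Bernstein bound $\|\psi(D - n) f\|_{L^r(\R^3)} \les \|\psi(D - n) f\|_{L^2(\R^3)}$ uniform in $n$ for $r \ge 2$, and the uniform boundedness on $L^r(\R^3)$ of each unit-frequency-localized piece of the wave propagators $\cos(t |\nb|)$ and $\tfrac{\sin(t|\nb|)}{|\nb|}$ for $|t| \le 1$ (with the natural $\jb{n}^{-1}$ gain on the $u_1$-component), one obtains, for all finite $q, r \ge 2$ and all $p < \infty$,
\begin{equation*}
\E\Big[ \, \|z^\o\|_{L^q([-T, T]; L^r(\R^3))}^p \, \Big]^{1/p} \les \sqrt p \; T^{1/q} \, \|(u_0, u_1)\|_{\mathcal H^s(\R^3)} ,
\end{equation*}
with implicit constant independent of $p$ and of $T \le 1$; here $s \ge 0$ is used to bound $\|u_0\|_{L^2}$ and $\|u_1\|_{H^{-1}}$ by $\|(u_0, u_1)\|_{\mathcal H^s}$.

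Armed with these, I would fix a small absolute constant $\eta > 0$ (depending only on the Strichartz constant) and set
\begin{equation*}
\O_T := \big\{ \, \o \in \O : \|z^\o\|_{L^5([-T, T]; L^{10}(\R^3))} \le \eta \, \big\} .
\end{equation*}
Applying the probabilistic Strichartz estimate with some exponent $q > 5$, using $\|z^\o\|_{L^5_t L^{10}_x([-T, T])} \les T^{\frac15 - \frac1q} \|z^\o\|_{L^q_t L^{10}_x([-T, T])}$, and optimizing over $p$ in Chebyshev's inequality gives $P(\O_T^c) \le C \exp(- c / T^\g)$ for some $\g > 0$, which is part~(i). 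To close the contraction on $\O_T$, observe that there $\|v + z^\o\|_{L^5_t L^{10}_x} \le \|v\|_{X_T} + \eta$, so the deterministic Strichartz bound gives $\|\G v\|_{X_T} \le C (\|v\|_{X_T} + \eta)^5$, while estimating the telescoped difference $(v_1 + z)^5 - (v_2 + z)^5$ in the same way yields $\|\G v_1 - \G v_2\|_{X_T} \les \big( \|v_1\|_{X_T}^4 + \|v_2\|_{X_T}^4 + \eta^4 \big) \|v_1 - v_2\|_{X_T}$. For $\eta$ small enough, $\G$ thus maps the ball $\{ \|v\|_{X_T} \le C \eta \}$ of $X_T$ into itself and is a contraction there; the critical term $v^5$, which carries no power of $T$, causes no trouble precisely because the radius of the ball is itself $O(\eta)$. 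The resulting unique fixed point $v = v^\o$, together with the same contraction estimates in the ball centered at $S(\cdot)(u_0^\o, u_1^\o)$, gives the existence and uniqueness claimed in part~(ii).

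The argument is essentially routine once the probabilistic Strichartz estimate is in place --- in agreement with the remark in the introduction that almost sure local well-posedness below the energy space can be obtained easily. The only point demanding a little care is the bookkeeping of H\"older exponents: one must ensure that every term carrying a factor of $z^\o$ acquires a strictly positive power of $T$, so that the requisite smallness is genuinely available for $T \ll 1$, while controlling the scaling-critical term $v^5$ through the smallness of the ball rather than through any time factor. I expect this, rather than any conceptual difficulty, to be the main (and only mildly delicate) step.
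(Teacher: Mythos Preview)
Your proposal is correct and follows essentially the same approach as the paper: the paper explicitly omits the proof, noting that it is standard and follows from the first-order expansion $u = z^\o + v^\o$ together with the deterministic Strichartz estimates (Lemma~\ref{LEM:Strichartz}) and the probabilistic Strichartz estimates (Lemma~\ref{LEM:Str}), with details deferred to \cite{Poc}. Your choice of $\O_T$, the contraction in $X_T$, and the upgrade of $v$ from $\dot{\mathcal H}^1$ to $\mathcal H^1$ via the crude operator bounds all match this outline; the only minor inefficiency is your detour through an auxiliary exponent $q>5$ to extract a power of $T$, whereas Lemma~\ref{LEM:Str} with $q=5$ already gives the factor $T^{2/5}$ directly in the tail bound.
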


This theorem  is in the spirit of 
the almost sure local well-posedness results in \cite{BTI, BOP1, Poc}.
Namely, given random initial data $(u_0^\o, u_1^\o)$, 
denote the linear
 and nonlinear parts of the solution 
$u^\omega$
to  \eqref{NLW}
by 
\begin{equation}
z^{\omega}(t): =S(t)(u_0^{\omega}, u_1^{\omega})
\qquad \text{and}
\qquad v^\omega:=u^\omega-z^\omega.
\label{z}
\end{equation}

\noi
Then,  \eqref{NLW} can be reformulated  as the following perturbed
NLW: 
\begin{equation}\label{v}
\begin{cases}
\pa_{t}^2 v^\omega-\Delta v^\omega+(v^\omega+z^\omega)^5=0\\
(v^\omega,\pa_t v^\omega)|_{t=0}=(0,0).
\end{cases}
\end{equation}

\noi
In view of the usual deterministic Strichartz estimates (Lemma \ref{LEM:Strichartz})
and the probabilistic Strichartz estimates (Lemma \ref{LEM:Str}), 
a simple fixed point argument allows us to 
construct  a solution $v^\o$ to \eqref{v}
in $C([-T, T];H^1(\R^3))$ for each $\o$ belonging to some appropriate set $ \O_T$.
This yields Theorem \ref{THM:LWP}. 
As this argument is standard, we omit the proof of Theorem \ref{THM:LWP}.
See \cite{Poc} for details.

\begin{remark}\label{REM:LWP}\rm
(i) Note that the regularity $s = 0$ is the lowest regularity 
for which one can prove
almost sure local well-posedness by this argument of 
 constructing a solution $v^\o$ to \eqref{v} in $C([-T, T]; H^1(\R^3))$.  
This is due to the fact that the nonlinear Duhamel term in 
\eqref{Duhamel}  gains exactly one derivative.

\smallskip

\noi
(ii) In the definition of the Wiener randomization \eqref{R1},
we used a smooth cutoff function $\psi$.
Theorem \ref{THM:LWP} still holds
even when we replace $\psi$ by the sharp characteristic function 
$\chi_{Q}$ of the unit  cube $Q$.
The same comment holds
for Theorem \ref{THM:GWP}.
See also Remark \ref{REM:GWP} (iii) below.

\end{remark}

Next, we turn our attention to the global-in-time behavior
of solutions with random initial data below the energy space.
The following is the main result of this paper.
%

\begin{theorem}[Almost sure global well-posedness]
\label{THM:GWP}
Let $s \in (\frac 12, 1)$.
Given  $(u_0, u_1) \in \mathcal{H}^s(\R^3)$, 
let $(u_0^\omega, u_1^\omega)$
be the Wiener randomization defined in \eqref{R1},
satisfying \eqref{cond}. 
Then, the energy-critical defocusing quintic NLW \eqref{NLW}
on $\R^3$ is almost surely globally well-posed
with  respect to the Wiener randomization $(u_0^\omega,u_1^\omega)$
as initial data.
More precisely,
there exists a set $ \O_{(u_0, u_1)}\subset \Omega $ of probability 1
such that, 
for  every $\o \in \O_{(u_0, u_1)}$, there exists a unique solution $u$ to \eqref{NLW}
in the class:
\begin{align*}
\big(S(t)(u_0^\omega, u_1^\omega), \dt S(t)(u_0^\omega, u_1^\omega)\big)
+C(\R; \mathcal{H}^1(\R^3))
\subset C(\R; \mathcal{H}^s(\R^3)).
\end{align*}

\end{theorem}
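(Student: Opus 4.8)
The plan is to combine almost sure local well-posedness (Theorem~\ref{THM:LWP}) with a probabilistic energy bound to run the Bourgain-type globalization argument. Recall from \eqref{z} the decomposition $u^\o = z^\o + v^\o$, where $z^\o(t) = S(t)(u_0^\o, u_1^\o)$ solves the linear equation and $v^\o$ solves the perturbed NLW~\eqref{v}. Since for $s \in (\tfrac12, 1)$ we do not have $(u_0^\o, u_1^\o)$ in the energy space, the energy of $u^\o$ itself is infinite; instead we track the energy $E(v^\o(t), \dt v^\o(t))$ of the \emph{nonlinear part}, which lies in $\mathcal{H}^1(\R^3)$ on its interval of existence. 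Differentiating $E(v^\o)$ along the flow, the $\Delta v^\o$ and $(v^\o)^5$ terms cancel as usual, leaving only error terms involving $z^\o$; each such term is multilinear in $v^\o$ and $z^\o$ and, crucially, \emph{linear or higher in $z^\o$}, so it can be controlled by the probabilistic Strichartz bounds for $z^\o$ together with the energy of $v^\o$. A Gronwall-type inequality then yields an a priori bound on $E(v^\o(t))$ on any finite time interval, with a bound depending only on the relevant Strichartz norms of $z^\o$, which are almost surely finite (indeed finite on all of $\R$ for the global-in-time probabilistic Strichartz estimates). This is exactly the ``uniform probabilistic energy bound for approximating random solutions'' advertised in the abstract.

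The implementation I would carry out in steps. First, set up the approximating solutions: truncate the random data (e.g.\ by frequency projection $P_{\le N}$) so that $(u_{0,N}^\o, u_{1,N}^\o)$ lies in the energy space, let $u_N^\o$ be the corresponding global solution guaranteed by the deterministic energy-critical GWP theory (the cited works \cite{Struwe, Grillakis92, Shatah_Struwe, Kapitanski, Ginibre, Bahouri_Shatah, Bahouri_Gerard, Tao}), and write $u_N^\o = z_N^\o + v_N^\o$ with $z_N^\o = S(t)(u_{0,N}^\o, u_{1,N}^\o)$. Second, establish the uniform-in-$N$ energy bound: for $\o$ in a set of full probability, and for each $T < \infty$, one has $\sup_N \sup_{|t| \le T} E(v_N^\o(t), \dt v_N^\o(t)) \le C(T, \o) < \infty$. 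The proof of this step is the energy-increment estimate described above, closed by Gronwall, using the almost sure finiteness of the global probabilistic Strichartz norms of $z_N^\o$ (which are uniformly bounded in $N$ by monotone convergence, since the Wiener-randomized data has these norms finite almost surely). Third, pass to the limit $N \to \infty$: the uniform energy bound gives compactness/stability for $\{v_N^\o\}$, and combined with a perturbation-theory/stability argument for the energy-critical NLW (again from the deterministic theory) one obtains convergence of $v_N^\o$ to a solution $v^\o \in C([-T,T]; \mathcal{H}^1(\R^3))$ of \eqref{v} on every $[-T,T]$, hence a global solution $u^\o = z^\o + v^\o$ in the asserted class. Fourth, conclude uniqueness from the uniqueness statement in Theorem~\ref{THM:LWP} applied on successive time intervals (the local theory's uniqueness ball, extended by the global a priori bound).

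The main obstacle is the energy-increment estimate in dimension three. As noted in the introduction, the argument of \cite{Poc} for $d = 4, 5$ fails for $d = 3$: the quintic nonlinearity $(v + z)^5$ expands into terms $v^{5-k} z^k$ for $k = 1, \dots, 5$, and the ``worst'' ones --- those with the most powers of $v$, namely $v^4 z$ --- must be bounded using only $\|v\|_{\dot H^1} \sim \|v\|_{L^6}$ and mixed Strichartz norms of $z$, and in $d=3$ the available Strichartz exponents for $z^\o$ are tighter, so the naive Hölder pairing does not close. Overcoming this requires the ``more intricate analysis'' promised in the introduction: presumably an interpolation between the energy bound and higher integrability of $z^\o$ (the improved $L^p_x$, $p < \infty$, integrability of the Wiener randomization), together with possibly a finer decomposition in time or frequency, to absorb the $z$-dependent errors into $\int E(v^\o) $ with an integrable-in-time coefficient. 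The remaining steps --- truncation, extraction of a limit, and uniqueness --- are comparatively routine given the deterministic energy-critical machinery and Theorem~\ref{THM:LWP}, though the stability/perturbation step must be stated carefully since it is applied to solutions that are not themselves in the energy space but whose nonlinear parts are.
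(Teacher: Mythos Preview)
Your overall architecture matches the paper's: truncate to get smooth solutions $v_N^\o$, establish a uniform-in-$N$ probabilistic energy bound on $v_N^\o$, and then pass to $v^\o$ via perturbation theory. But the heart of the matter --- the energy-increment estimate for the quintic on $\R^3$ --- is left unresolved, and the tools you suggest (interpolation, higher $L^p_x$ integrability of $z^\o$, a ``finer decomposition'') are not the ones that actually close the argument.

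The specific missing idea is this: in $\frac{d}{dt}E(v_N)$ the dangerous term is $\int \dt v_N\cdot 5 z_N v_N^4\,dx$. Na\"ive H\"older gives $\|\dt v_N\|_{L^2}\|z_N\|_{L^\infty}\|v_N\|_{L^6}^3\|v_N\|_{L^6}$, which produces $E(v_N)^{3/2}$ on the right and Gronwall fails. The paper instead writes $5\dt v_N\, z_N v_N^4 = z_N\,\dt(v_N^5)$ and \emph{integrates by parts in time}, producing a boundary term $\int z_N v_N^5\,dx$ (handled by Young's inequality and absorbed into $E(v_N)$) and the term $\int_0^t\!\!\int \dt z_N\cdot v_N^5\,dx\,dt'$. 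Writing $\dt z_N = \jb{\nabla}\wt z_N$, a Littlewood-Paley decomposition of $v_N^5$ transfers the derivative: the highest-frequency factor $\P_{M_1}v_N$ absorbs $M^{1-s+\dl}$ of it, leaving $\jb{\nabla}^{s-\dl}\wt z_N$ in $L^\infty_x$. After interpolating $\|\P_{M_1}v_N\|_{L^3}$ between $L^2$ and $L^6$, the estimate closes \emph{precisely when} $2(1-s+\dl)\le 1$, i.e.\ $s>\tfrac12$; this is where the threshold in the theorem comes from, and it does not emerge from the approach you sketch. To make this work one also needs control of $\|z_N\|_{L^\infty_t L^r_x}$ and $\|\jb{\nabla}^{s-\dl}\wt z_N\|_{L^\infty_{t,x}}$, which are \emph{not} covered by the probabilistic Strichartz estimates you invoke (those are for $L^q_t$, $q<\infty$); the paper proves a new $L^\infty_t$ probabilistic estimate (Proposition~\ref{PROP:infty}) via a dyadic-in-time chaining argument, at the cost of an $\eps$ of regularity.

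A secondary point: the passage from $v_N^\o$ to $v^\o$ in the paper is not by compactness. It is a direct iterative construction (Proposition~\ref{PROP:pLWP2}): one uses the uniform energy bound \eqref{Fenergy} to fix a uniform local existence time $\tau_0$ via the ``good'' local theory (Lemma~\ref{LEM:pLWP}), builds $v$ interval by interval, and at each step controls $\|(v-v_N,\dt v-\dt v_N)\|_{\dot{\mathcal H}^1}$ by $N^{-\al}$ using $\|z^\o - z_N^\o\|_{L^5_TL^{10}_x}\les N^{-\al}$, thereby transferring the energy bound from $v_N$ to $v$. This is cleaner than compactness and also delivers the a posteriori energy bound on $v^\o$ (Proposition~\ref{PROP:aas}\,(iii)) that you would otherwise have to recover separately.
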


Before explaining the main ideas of the 
proof of Theorem \ref{THM:GWP},
we first discuss the previous results directly relevant to our problem.
Interested readers are referred to \cite{Poc} for a thorough list of references
on almost sure global well-posedness of evolution equations
with random initial data.


Previously, L\"uhrmann-Mendelson \cite{LM} considered energy-subcritical defocusing NLW
on $\R^3$ with nonlinearity $|u|^{p-1}u$, $ p < 5$,  
with random initial data of the form \eqref{R1}.
In particular, for $\frac 14 (7 + \sqrt{73})\simeq 3.89 < p < 5$,  
they proved almost sure global well-posedness below
the scaling critical Sobolev regularity $s_\text{crit} := \frac 32 - \frac 2 {p-1}<1$.
Their approach is based on the probabilistic high-low method
introduced by Colliander-Oh \cite{Colliand_Oh} in the study
of  the cubic nonlinear Schr\"odinger equation (NLS) on $\T$
with random initial data.
This method is  an adaptation of Bourgain's high-low method \cite{Bourgain98}
to the probabilistic setting
and is effective in a subcritical regime.
It is, however,  not an appropriate tool in our energy-critical setting.

In \cite{Poc}, the second author considered 
the energy-critical defocusing NLW on $\R^d$, $d = 4, 5$, 
and proved almost sure global well-posedness below the energy space.
The main novel approach in \cite{Poc} is the {\it probabilistic perturbation theory}. 
See also  B\'enyi-Oh-Pocovnicu \cite{BOP2}.
One of the  key ingredients in applying
probabilistic perturbation theory
was a {\it probabilistic (a priori) energy bound}.
Here, the probabilistic  energy bound states that 
given any $T, \eps > 0$, there exists $\O_{T, \eps}\subset \O$
with $P(\O_{T, \eps}^c) < \eps$
such that, for all $\o \in \O_{T, \eps}$,
the solution $v^\o$ to the perturbed NLW  \eqref{v} 
(with the appropriate energy-critical powers for $d = 4, 5$)
satisfies
\begin{equation}
 \|(v^\omega(t), \dt v^\o(t))\|_{L^\infty([0,T],\dot{\mathcal{H}}^1(\R^d))}\leq C(T,\eps)
\label{PocEnergy}
 \end{equation}

\noi
for some $C(T, \eps) > 0$.
Such a probabilistic energy bound
was first established by Burq-Tzvetkov \cite{BT3} in the context of the (energy-subcritical) 
cubic NLW on $\T^3$.
In \cite{BT3, Poc}, 
\eqref{PocEnergy} was obtained
by estimating the growth of the (non-conserved) energy $E(v^\o)$ of 
the solution 
$v^\o$ to \eqref{v}
via probabilistic Strichartz estimates, Sobolev's inequality, 
and Gronwall's inequality.
Such an argument as in \cite{BT3, Poc}, however, does not hold
for the energy-critical defocusing quintic NLW \eqref{NLW} on $\R^3$.
In particular, 
the degree of the quintic nonlinearity is too high to close the argument.
See Remark 5.1 in \cite{Poc}.

Theorem \ref{THM:GWP} 
covers the missing case from \cite{LM, Poc}:
$p = 5$ and $d = 3$.
This corresponds to the 
energy-critical NLW in three
spatial dimensions,
and thus it is  of importance  from a physical point of view
as well as an analytical point of view. 
As in \cite{Poc}, 
the main approach 
to prove Theorem \ref{THM:GWP} is
the  probabilistic perturbation theory.
In the deterministic setting, perturbation theory has played an important role
in the study of the energy-critical NLS and NLW  \cite{CKSTT, KM}. 
It has also been effective in establishing
global well-posedness of NLS with a combined power-type nonlinearity
\cite{TVZ, KOPV}.
In our probabilistic approach, we view \eqref{v} as the defocusing quintic NLW
with a (random) perturbation given by 
$(v^\omega+z^\omega)^5-(v^\omega)^5$. 
Then,  smallness of the  perturbation 
comes from 
the probabilistic Strichartz estimates (Lemma \ref{LEM:Str})
satisfied by the random linear part $z^\o$.
In particular, 
by restricting the analysis to short time intervals, 
we can make the  perturbation small.

In applying perturbation theory in the probabilistic setting in \cite{Poc}, 
it was essential to have 
the  probabilistic energy bound \eqref{PocEnergy}.
As we pointed out above, however, 
the approach in \cite{BT3, Poc}
does not yield
a probabilistic energy bound 
\eqref{PocEnergy}
for the perturbed NLW \eqref{v} on $\R^3$.
Indeed, this is the main source of difficulty in establishing
Theorem \ref{THM:GWP}.
In order to resolve this issue,
we develop a more intricate analysis
that will allow us to obtain a
suitable replacement of the probabilistic energy bound \eqref{PocEnergy}.
More precisely,
we consider a sequence $\{v_N^\o\}_{N \geq 1, \text{ dyadic}}$ of smooth random approximating solutions
and establish 
 a {\it uniform} (in $N$) probabilistic energy bound
for 
$v_N^\o$.
See Proposition \ref{PROP:Penergy} below.
The main ingredient in the proof of 
Proposition \ref{PROP:Penergy}
is a new probabilistic estimate 
(Proposition \ref{PROP:infty}),
where we control the $L^\infty_t$-norm of random linear solutions.
We point out that 
we only prove 
a probabilistic energy bound, uniformly in $N$, 
for the approximating random solutions $v_N^\o$.
In particular, 
we do not know how to directly  prove 
a probabilistic energy bound  
\eqref{PocEnergy}
for the solution $v^\o$ to \eqref{v}.
Such a 
probabilistic energy bound for $v^\o$ follows 
as a corollary to the proof of Theorem \ref{THM:GWP}.
See Proposition \ref{PROP:aas} below.

Finally, 
the  uniform probabilistic energy bound (Proposition \ref{PROP:Penergy})
combined with the perturbation theory adapted to our setting (Proposition \ref{PROP:pLWP2})
yields Theorem \ref{THM:GWP}.

We conclude this introduction by stating several remarks.

\begin{remark} \label{REM:GWP}\rm
(i) The  uniqueness statement  in Theorem \ref{THM:GWP} holds
in the following sense.
The set $\O_{(u_0, u_1)} $ in Theorem \ref{THM:GWP}
can be written as $\O_{(u_0, u_1)} =\bigcup_{\eps>0}\O_\eps$
with $P(\O_\eps^c)<\eps$.
Given $\eps>0$, for all $\omega\in\O_\eps$ and any finite $T>0$, 
there exists a sequence of disjoint intervals $\{I_j\}_{j\in\mathbb N}$
covering $[-T,T]$ 
such that the solution $u^\omega$
is unique in some ball centered at $S(\cdot)(u_0^\omega, u_1^\omega)$
in $C(I_j,\dot H^1(\R^3))\cap L^5(I_j, L^{10}(\R^3))$
for all $j\in\mathbb N$.
The uniqueness part of Theorem \ref{THM:GWP}
is essentially contained in the local-in-time Cauchy theory
and we omit its proof.  See Theorem 5.3 in \cite{Poc}.

\smallskip

\noi
(ii)  
As in \cite{BT3, OQ, Poc}, we can enhance the statement in Theorem \ref{THM:GWP}
in the following sense.
Let  ${\bf u_0}:\O\to
\mathcal{H}^s(\R^3)$
be a map 
given by 
${\bf u_0}(\omega) : =  (u_0^\omega,u_1^\omega)$,
where $(u_0^\omega,u_1^\omega)$ 
is as  in \eqref{R1}.
Then, the map  ${\bf u_0}$
induces a probability measure $\mu = \mu_{(u_0, u_1)} = P\circ {\bf u_0}^{-1}$
on $\mathcal{H}^s(\R^3)$.
Arguing as in \cite{Poc}, 
we can show that there exists a set of $\mu$-full measure $\Sigma \subset \mathcal{H}^s(\R^3)$
such that 
(a)  for any $(\phi_0,\phi_1)\in \Sigma$,
there exists a unique global solution $u$ to  \eqref{NLW}  with initial data 
$\left(u,\pa_t u\right)\big|_{t=0}=(\phi_0, \phi_1)$
and (b) 
 $\mu\big(\Phi(t)(\Sigma)\big)  = 1 $ for any $t \in \R$,
 where
 $\Phi(t)$ denotes the solution map of \eqref{NLW}.
Namely, the measure of our initial data set $\Si$ does not become smaller
under the dynamics of \eqref{NLW}.

\smallskip

\noi
(iii)  As a byproduct of
the proof of Theorem \ref{THM:GWP}, we obtain
the probabilistic energy bound \eqref{PocEnergy}
for the solution $v^\o$ to \eqref{v}.
Then, 
by replacing the smooth cutoff function $\psi$
with the sharp cutoff function $\chi_{Q}$, 
we can also obtain  the probabilistic continuous dependence
of the solution map, and thus probabilistic Hadamard global well-posedness
in the sense of \cite{BT3, Poc}.
See Remark 1.4 in \cite{Poc}.

\smallskip

\noi
(iv)
The almost sure global well-posedness result
of the energy-critical wave equation on $\R^d$, $d = 4, 5$, in \cite{Poc}
holds for $s > 0$ when $d = 4$ and $s \geq 0$ when $d = 5$.
This is (almost) optimal in view of Remark \ref{REM:LWP} (i).
Theorem \ref{THM:GWP} on $\R^3$, however, 
holds only for $s > \frac{1}{2}$.
This regularity loss appears in establishing 
 a uniform probabilistic energy bound
for approximating random solutions
(Proposition \ref{PROP:Penergy}).
At this point, we do not know how to close this regularity gap.

\smallskip

\noi
(v)
In view of Theorem \ref{THM:GWP}, 
it is natural to consider 
the  problem of scattering
for \eqref{NLW}
in the probabilistic setting.
A key ingredient would be 
 to establish a probabilistic bound on the global space-time
Strichartz $L^5_tL^{10}_x$-norm
of the solution $v^\o$ to \eqref{v}.
The probabilistic 
 perturbation theory
used for 
Theorem \ref{THM:GWP}, 
however, only yields
a bound on the 
$L^5_tL^{10}_x$-norm
of the solution $v^\o$ on short time intervals
and does not allow us to establish a global space-time bound.
Thus, a new idea is needed
to prove probabilistic scattering (for large data).

As in the deterministic setting, 
there is no such difficulty in the small data case.
Indeed, 
L\"uhrmann-Mendelson \cite{LM}
proved a probabilistic small data scattering result for 
\eqref{NLW}
with large probability.
Moreover, even in the large data case, 
by considering the Wiener randomization on dilated cubes
as in \cite{BOP2}, 
one can establish a probabilistic scattering result for \eqref{NLW}
with large probability.
See \cite{BOP2} for details of such an argument.
It is worthwhile to note that 
these results hold  only with large probability, 
i.e.~not almost surely.

\end{remark}

This paper is organized as follows.
In Section \ref{SEC:2}, 
we introduce basic notations
and recall the deterministic Strichartz estimates.
Section \ref{SEC:3}
covers 
the  necessary probabilistic estimates.
In particular, 
Proposition \ref{PROP:infty}
is novel and plays an essential role in 
proving 
a uniform probabilistic energy bound
for  approximating random solutions 
(Proposition \ref{PROP:Penergy}) in Section \ref{SEC:4}.
In Section \ref{SEC:perturb}, 
we handle the deterministic component  of the proof of Theorem \ref{THM:GWP}.
Then, we present the proof of Theorem \ref{THM:GWP} in Section \ref{SEC:GWP}.

In view of  the time reversibility of the equation, 
we only consider positive times in the following.

\section{Notations}
\label{SEC:2}

We say that $u$ is a solution to the following nonhomogeneous wave equation:
\begin{equation}
\begin{cases}
\pa_{t}^2u-\Delta u+F=0\\
(u, \dt u)|_{t = t_0} = (\phi_0, \phi_1)
\end{cases}
\label{Wave}
\end{equation}

\noi
on a time interval $I$ containing $t_0$, 
if $u$ satisfies  the following Duhamel formulation:
\begin{equation}
u(t)=S(t-t_0)(\phi_0, \phi_1)
-\int_{t_0}^t \frac{\sin ((t-t')|\nabla|)}{|\nabla|}F(t')dt'
\label{Duhamel}
\end{equation}

\noi
for $t \in I$.
Here, $S(\cdot)$ denotes the linear propagator defined in \eqref{Zlinear}.
We now recall the Strichartz estimates for wave equations on $\R^3$.
 We say that $(q,r)$ is a $s$-wave admissible pair
if $q\geq 2$, $2\leq r<\infty$,
\[\frac{1}{q}+\frac{1}{r}\leq \frac{1}{2},
\quad \text{and}\quad 
\frac 1q+\frac 3r=\frac 32-s.\]

\noi
Then, we have the following Strichartz estimates.
See \cite{Ginibre, Lindblad, Keel}
for more discussions on the  Strichartz estimates. 

\begin{lemma}
\label{LEM:Strichartz}
Let $s>0$. 
Let  $(q,r)$ and $(\tilde{q},\tilde{r})$ be $s$- and $(1-s)$-wave admissible pairs,
respectively. 
Then, we have
\begin{align}\label{Strichartz}
\|(u, \dt u)\|_{L^\infty_t(I;\dot{\mathcal{H}}^s_x)}+\|u\|_{L^q_t(I; L^r_x)}\lesssim
\|(\phi_0, \phi_1) \|_{\dot{\mathcal{H}}^{s}}+\|F\|_{L^{\tilde{q}'}_t(I; L^{\tilde{r}'}_x)}
\end{align}

\noi
for all solutions  $u$ to \eqref{Wave} on a time interval $I \ni t_0$.

\end{lemma}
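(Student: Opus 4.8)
The plan is the classical route to Strichartz estimates: establish a frequency-localized dispersive decay bound for the free wave propagator, promote it to a space-time estimate via a $TT^*$ argument and the Hardy--Littlewood--Sobolev inequality, remove the frequency localization by Littlewood--Paley theory, and handle the retarded Duhamel integral with the Christ--Kiselev lemma. By the Duhamel representation \eqref{Duhamel} it suffices to prove the homogeneous estimate for $S(t)(\phi_0,\phi_1)$ together with the corresponding bound for the inhomogeneous term $\int_{t_0}^t \frac{\sin((t-t')|\nabla|)}{|\nabla|}F(t')\,dt'$. The $L^\infty_t\dot{\mathcal H}^s_x$ components are essentially free: for the homogeneous part this is Plancherel's theorem, since $\cos(t|\nabla|)$ and $|\nabla|^{-1}\sin(t|\nabla|)$ are Fourier multipliers that produce the stated $\dot H^s\times\dot H^{s-1}$-bound uniformly in $t$; for the Duhamel term one obtains the same bound by duality against the homogeneous space-time estimate. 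The real content is thus the $L^q_t L^r_x$ bound.

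First I would record the frequency-localized dispersive estimate coming from stationary phase: writing $P_N$ for a Littlewood--Paley projection onto $|\xi|\sim N$, one has
\[
\bigl\|e^{\pm it|\nabla|}P_N f\bigr\|_{L^\infty_x(\R^3)}\lesssim N^3(1+N|t|)^{-1}\|f\|_{L^1_x(\R^3)},
\]
and interpolating this with the conservation bound $\|e^{\pm it|\nabla|}P_N f\|_{L^2_x}\lesssim\|f\|_{L^2_x}$ yields, for $2\le r\le\infty$,
\[
\bigl\|e^{\pm it|\nabla|}P_N f\bigr\|_{L^r_x}\lesssim N^{3(1-\frac2r)}(1+N|t|)^{-(1-\frac2r)}\|f\|_{L^{r'}_x}.
\]
A $TT^*$ argument reduces the bound $\|e^{it|\nabla|}P_N f\|_{L^q_t L^r_x}\lesssim N^s\|f\|_{L^2_x}$ to the boundedness of convolution in time against the operator-norm kernel above; the decay estimate together with Young's inequality (for interior admissible pairs, where $\frac1q+\frac1r<\frac12$) and the Hardy--Littlewood--Sobolev inequality (on the sharp line $\frac1q+\frac1r=\frac12$) gives exactly this, with the scaling relation $\frac1q+\frac3r=\frac32-s$ forcing the power $N^s$, and with the restriction $q>2$ --- which holds automatically for every $s$-wave admissible pair --- being precisely what makes these estimates applicable. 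Summing over dyadic $N$ by the Littlewood--Paley square function estimate, which is where the hypothesis $r<\infty$ enters, removes the frequency projection and produces $\|e^{\pm it|\nabla|}f\|_{L^q_t L^r_x}\lesssim\||\nabla|^s f\|_{L^2_x}$, hence the homogeneous Strichartz estimate for $S(t)$.

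Dualizing the homogeneous estimate and composing it with its dual produces the non-retarded inhomogeneous bound $\bigl\|\int_\R\frac{\sin((t-t')|\nabla|)}{|\nabla|}F(t')\,dt'\bigr\|_{L^q_t L^r_x}\lesssim\|F\|_{L^{\tilde q'}_t L^{\tilde r'}_x}$, where the choice of $(\tilde q,\tilde r)$ as a $(1-s)$-wave admissible pair is what makes the scaling close. The Christ--Kiselev lemma then upgrades this to the retarded integral in \eqref{Duhamel}, which is legitimate because $\tilde q'<q$ (indeed $\tilde q'<2<q$ when $0<s<1$). The only genuinely delicate point is the transition from frequency-localized to global estimates, which relies on Littlewood--Paley theory and hence on $r<\infty$; the remaining ingredients are the standard stationary-phase and $TT^*$ computations, for which see \cite{Ginibre, Lindblad, Keel}.
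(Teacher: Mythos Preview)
Your proposal is a correct sketch of the standard proof of the wave Strichartz estimates, and in fact it is considerably more detailed than what the paper does: the paper gives no proof of Lemma~\ref{LEM:Strichartz} at all, instead treating it as a known result and directing the reader to \cite{Ginibre, Lindblad, Keel}. So there is no ``paper's own proof'' to compare against; your outline is essentially the argument one finds in those references (frequency-localized dispersion, $TT^*$ with Hardy--Littlewood--Sobolev on the sharp admissible line, Littlewood--Paley summation requiring $r<\infty$, and Christ--Kiselev for the retarded Duhamel term using $\tilde q'<2<q$).

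One small caveat worth noting: your Christ--Kiselev step invokes $\tilde q'<2<q$ ``when $0<s<1$,'' which is indeed the range used in the paper (in fact only $s=1$ and $s=0$ appear in the applications), but the lemma as stated allows any $s>0$. For $s\ge 1$ the notion of a $(1-s)$-wave admissible pair degenerates (at $s=1$ it forces $(\tilde q,\tilde r)=(\infty,2)$, whence $\tilde q'=1$ and the argument still closes), so this is a cosmetic gap rather than a substantive one. Everything else is sound.
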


\noi
In our argument, 
we will only use 
the following
wave admissible pairs: $\big(5, 10)$ with $s = 1$
and 
 $(\infty,2)$ with $s = 0$.
For simplicity,  we often denote the space $L^q_t(I; L^r_x)$  by $L^q_IL^r_x$
or  $L^q_TL^r_x$ if $I=[0,T]$.

Next, we briefly go over 
the Littlewood-Paley theory.
Let $\varphi:\R \to [0, 1]$ be a smooth  bump function supported on $[-\frac{8}{5}, \frac{8}{5}]$ 
and $\varphi\equiv 1$ on $[-\frac 54, \frac 54]$.
Given dyadic $N \geq1$, 
we set $\varphi_1(\xi) = \varphi(|\xi|)$
and 
\[\varphi_N(\xi) = \varphi\big(\tfrac{|\xi|}N\big)-\varphi\big(\tfrac{2|\xi|}N\big)\]

\noi
for $N \geq 2$.
Then, we define the Littlewood-Paley projection $\P_N$
as the Fourier multiplier operator with symbol $ \varphi_N$.
Moreover, we define $\P_{\leq N}$ and $\P_{\geq N}$
by 
 $\P_{\leq N} = \sum_{1\leq M \leq N} \P_M$ and $\P_{\geq N} = \sum_{M\geq N} \P_M$.

Lastly, recall  Bernstein's inequality:
\begin{equation}
\label{Bern1}
\|\pP_{\leq N} f\|_{L^q(\R^3)} \lesssim N^{\frac{3}{p}-\frac{3}{q}}\|\pP_{\leq N} f\|_{L^p(\R^3)}, \quad
1\leq p \leq q \leq \infty.
\end{equation}

\noindent
As an immediate corollary of \eqref{Bern1}, we have, 
for all $n\in\Z^3$,
\begin{equation}
\label{Bern2}
\|\psi(D -n) \phi\|_{L^q(\R^3)} \lesssim \|\psi(D-n)  \phi \|_{L^p(\R^3)}, \qquad
1\leq p \leq q \leq \infty.
\end{equation}

\section{Probabilistic estimates}
\label{SEC:3}

In this section,  we first review some basic properties of randomized functions.
Then, we present the main new probabilistic estimate
(Proposition \ref{PROP:infty}), controlling the $L^\infty_t$-norm
of random linear solutions.

First recall the following probabilistic estimate.
See \cite{BTI} for the proof.

\begin{lemma}\label{LEM:HC}
Let $\{g_n\}_{n\in \Z^3}$ be a sequence of mean zero complex-valued,
 random variables  
such that $g_{-n}=\overline{g_n}$ for all 
$n\in\Z^3$.
With $\mathcal{I}$  as in \eqref{Index}, 
assume that $g_0$, $\Re g_n$, and $\Im  g_n$,
$n\in\mathcal I$, 
are independent. 
Moreover, 
assume that 
\eqref{cond} is satisfied.
Then, there exists $C>0$ such that the following holds:
\begin{equation*}
\Big\|\sum_{n\in\Z^3}g_n(\omega)c_n\Big\|_{L^p(\Omega)}\leq C\sqrt{p} \|c_n\|_{\ell^2_n(\Z^3)}
\end{equation*}

\noi
for any $p\geq 2$
and any sequence $\{c_n\} \in \ell^2 (\mathbb{Z}^3)$ satisfying $c_{-n}=\overline{c_n}$
for all $n\in\Z^3$.
\end{lemma}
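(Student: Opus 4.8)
The statement is a Khinchin-type (or Paley--Zygmund-type) inequality for the random series $\sum_{n\in\Z^3} g_n(\o) c_n$, under the reality constraint $g_{-n} = \overline{g_n}$ and the exponential moment bound \eqref{cond}. The plan is to reduce the complex series with its Hermitian symmetry to a sum over the index set $\mathcal{I}$ from \eqref{Index}, where the constituent real random variables $g_0$, $\Re g_n$, $\Im g_n$ ($n \in \mathcal{I}$) are genuinely independent, and then to apply the standard single-exponential-moment argument.

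First I would rewrite the series using $\Z^3 = \mathcal{I} \cup (-\mathcal{I}) \cup \{0\}$. Pairing the term at $n$ with the term at $-n$ and using $g_{-n} = \overline{g_n}$ together with $c_{-n} = \overline{c_n}$, one gets
\begin{align*}
\sum_{n\in\Z^3} g_n(\o) c_n
= g_0 c_0 + \sum_{n\in\mathcal{I}} \big( g_n c_n + \overline{g_n}\,\overline{c_n}\big)
= g_0 c_0 + 2\sum_{n\in\mathcal{I}} \Re\big(g_n c_n\big),
\end{align*}
which is a real-valued sum. Writing $g_n = a_n + i b_n$ and $c_n = \alpha_n + i\beta_n$, each summand $\Re(g_n c_n) = a_n \alpha_n - b_n \beta_n$ is a linear combination of the independent real random variables $a_n = \Re g_n$, $b_n = \Im g_n$. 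So the whole expression is a sum $\sum_k X_k$ of independent, mean-zero real random variables $X_k$ (namely $g_0 c_0$ and the individual terms $2a_n\alpha_n$, $-2b_n\beta_n$), whose ``coefficients'' have $\ell^2$-norm controlled by $\|c_n\|_{\ell^2(\Z^3)}$, since $|c_0|^2 + \sum_{n\in\mathcal I}(\alpha_n^2 + \beta_n^2) \le \|c_n\|_{\ell^2}^2$.

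Next, the assumption \eqref{cond} gives the sub-Gaussian moment generating bound $\mathbb{E}[e^{\gamma X_k}] \le e^{c' |d_k|^2 \gamma^2}$ for each $\gamma \in \R$, where $d_k$ is the coefficient of $X_k$; this follows for $g_0 c_0$ from case (i) of \eqref{cond} and for the terms $a_n\alpha_n$, $b_n\beta_n$ from case (ii) by choosing $\gamma$ appropriately in $\R^2$ (one takes $\gamma$ along a coordinate axis to isolate $\Re g_n$ or $\Im g_n$). By independence, $\mathbb{E}[e^{\gamma \sum_k X_k}] = \prod_k \mathbb{E}[e^{\gamma X_k}] \le e^{c' \gamma^2 \sum_k |d_k|^2} \le e^{C\gamma^2 \|c_n\|_{\ell^2}^2}$. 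A sub-Gaussian random variable $Y$ with $\mathbb{E}[e^{\gamma Y}] \le e^{A\gamma^2}$ (for all real $\gamma$) satisfies $\|Y\|_{L^p(\Omega)} \lesssim \sqrt{A p}$ for all $p \ge 2$: expand $e^{\gamma|Y|} \le e^{\gamma Y} + e^{-\gamma Y}$, use Markov's inequality to get the tail bound $P(|Y| > \lambda) \le 2 e^{-\lambda^2/(4A)}$ by optimizing in $\gamma$, then integrate $\mathbb{E}[|Y|^p] = p\int_0^\infty \lambda^{p-1} P(|Y|>\lambda)\, d\lambda$ and use the Gamma function bound $\Gamma(p/2+1)^{1/p} \lesssim \sqrt p$. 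Applying this with $Y = \sum_k X_k$ and $A = C\|c_n\|_{\ell^2}^2$ yields the claimed bound.

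The only mild subtlety — and hence the step to be most careful about — is the bookkeeping in the reduction to the index set $\mathcal{I}$: one must correctly handle the boundary cases where a coordinate of $n$ vanishes so that the decomposition $\Z^3 = \mathcal{I} \cup (-\mathcal{I}) \cup \{0\}$ is a genuine disjoint union, and one must verify that pairing $n$ with $-n$ produces exactly the independent building blocks $\Re g_n$, $\Im g_n$ the hypothesis supplies (rather than, say, double-counting or mixing dependent variables). Once this is set up, the exponential-moment-to-$L^p$ passage is entirely routine, and the $\sqrt p$ growth is exactly what the single-exponential (sub-Gaussian) bound in \eqref{cond} delivers.
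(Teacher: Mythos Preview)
Your argument is correct and is precisely the standard Khinchin/sub-Gaussian proof the paper has in mind: note that the paper does not supply its own proof of this lemma but simply refers the reader to \cite{BTI}, where the same exponential-moment-to-tail-to-$L^p$ scheme is carried out. The reduction via $\Z^3 = \mathcal{I}\cup(-\mathcal{I})\cup\{0\}$ and the use of \eqref{cond} along coordinate axes to extract sub-Gaussian bounds on $\Re g_n$, $\Im g_n$ are exactly the right ingredients, and your bookkeeping of the $\ell^2$-norm of the coefficients is fine up to harmless constants.
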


Next, we recall the local-in-time probabilistic Strichartz estimates.

\begin{lemma}[Proposition 2.3 in \cite{Poc}]\label{LEM:Str}
Given a pair   $(u_0, u_1)$ of real-valued functions defined on $\R^3$,
let $(u_0^{\omega}, u_1^\omega)$ be the Wiener randomization defined in \eqref{R1}, satisfying \eqref{cond}. Let $I=[a,b]\subset \R$ be a compact time interval.

\smallskip 
\noindent
{\rm(i)} If $(u_0, u_1) \in \dot{\mathcal{H}}^0(\R^3)$,
then given $1\leq q<\infty$ and $2\leq r<\infty$, there exist $C,c>0$ such that
\begin{align*}
P\left(\left\|S(t)(u_0^\omega,u_1^\omega)\right\|_{L^q_t(I; L^r_x)}>\ld\right)
\leq C\exp\Bigg(-c\frac{\ld^2}{|I|^{\frac2q} \|(u_0, u_1)\|_{\dot{\mathcal{H}}^0}^2}\Bigg).
\end{align*}

\noindent
{\rm (ii)} 
If $(u_0, u_1) \in \mathcal{H}^s(\R^3)$,
 then given $1\leq q< \infty$, $2\leq r\leq \infty$,
there exist $C,c>0$ such that
\begin{align*}
P\left(\left\|S(t)(u_0^\omega,u_1^\omega)\right\|_{L^q_t(I; L^r_x)}>\ld\right)
\leq C\exp\Bigg(-c\frac{\ld^2}{\max\left(1, |a|^2, |b|^2\right)|I|^{\frac2q}
\|(u_0, u_1)\|_{\mathcal{H}^s}^2}\Bigg)
\end{align*}

\noi
for  \textup{(ii.a)} $s = 0$ if $r < \infty$
and \textup{(ii.b)}  $s > 0$ if $r = \infty$.

\end{lemma}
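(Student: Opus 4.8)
The plan is to reduce everything to a single Gaussian-type tail estimate obtained by combining the Chernoff/large deviation bound from Lemma \ref{LEM:HC} with the deterministic Strichartz inequality of Lemma \ref{LEM:Strichartz} (in the form of a Bernstein-type gain from the unit-cube frequency localization, \eqref{Bern2}). The starting observation is that, by \eqref{Zlinear} and \eqref{R1},
\[
S(t)(u_0^\omega, u_1^\omega) = \sum_{n\in\Z^3} \Big( g_{n,0}(\omega)\, \cos(t|\nabla|)\psi(D-n)u_0 + g_{n,1}(\omega)\, \tfrac{\sin(t|\nabla|)}{|\nabla|}\psi(D-n)u_1\Big),
\]
so that for fixed $(t,x)$ this is a sum of the form $\sum_n g_{n,j}(\omega)c_{n,j}(t,x)$ with $c_{n,j}$ deterministic. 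The key point is that the $\psi(D-n)$ localize to unit cubes, on which the wave propagators $\cos(t|\nabla|)$ and $\sin(t|\nabla|)/|\nabla|$ are essentially harmless.

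First I would apply Lemma \ref{LEM:HC} in the variable $\omega$ for fixed $(t,x)$, take $L^p_{t,x}$ norms (Minkowski's inequality, valid since $p$ will be taken large, in particular $p\ge q$ and $p\ge r$), and thereby bound $\big\|\,\|S(t)(u_0^\omega,u_1^\omega)\|_{L^q_tL^r_x}\big\|_{L^p(\Omega)}$ by $C\sqrt{p}$ times the square function $\big\| \big(\sum_n |\cos(t|\nabla|)\psi(D-n)u_0|^2 + \cdots\big)^{1/2}\big\|_{L^q_tL^r_x}$. Next I would estimate this square function: using $r\ge 2$ to pass the $\ell^2_n$-sum outside the $L^r_x$-norm (Minkowski again), and then using \eqref{Bern2} together with the fact that $\cos(t|\nabla|)$ is an $L^2_x\!\to\!L^2_x$ bounded Fourier multiplier (and $\sin(t|\nabla|)/|\nabla|$ maps $\dot H^{-1}\to L^2$ with a factor $\max(1,|a|,|b|)$ on a compact interval $I=[a,b]$, handling the low frequencies where $1/|\nabla|$ is singular), to upgrade $L^r_x$ to $L^2_x$ at the cost of a dimensional constant, then bound the resulting $\ell^2_n L^2_x$ quantity by $\|(u_0,u_1)\|_{\dot{\mathcal H}^0}$ (resp. $\max(1,|a|,|b|)\|(u_0,u_1)\|_{\mathcal H^s}$). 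The $L^q_t$-norm over $I$ contributes only $|I|^{1/q}$ since the bound is uniform in $t\in I$. For the $r=\infty$ endpoint in part (ii.b), one pays an extra $s>0$ derivative: sum over dyadic blocks $\P_M$, use Bernstein $L^\infty_x\lesssim M^{3/r_0}L^{r_0}_x$ for some large finite $r_0$, and absorb the resulting $\sum_M M^{3/r_0-s}$ into an $\ell^2$ bound — this is where the hypothesis $s>0$ (rather than $s=0$) is genuinely needed.

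Finally, the passage from the moment bound $\big\|\,\|S(t)(u_0^\omega,u_1^\omega)\|_{L^q_tL^r_x}\big\|_{L^p(\Omega)} \le C\sqrt{p}\,\sigma$ (with $\sigma$ the appropriate deterministic constant above) to the stated exponential tail is the standard optimization: Chebyshev gives $P(\|\cdots\|>\lambda)\le (C\sqrt p\,\sigma/\lambda)^p$, and choosing $p = (\lambda/(C\sigma e^{1/2}))^2$ (assuming this is $\ge 2$; otherwise the estimate is trivial after adjusting constants) yields the bound $C\exp(-c\lambda^2/\sigma^2)$. I expect no single hard obstacle here — the lemma is quoted verbatim from \cite{Poc} — but the step requiring the most care is the low-frequency analysis of $\sin(t|\nabla|)/|\nabla|$ acting on the $u_1$-piece, which is the source of the $\max(1,|a|^2,|b|^2)$ factor and the reason part (i) requires $(u_0,u_1)\in\dot{\mathcal H}^0$ while part (ii) tolerates $\mathcal H^s$ at the price of that interval-dependent constant.
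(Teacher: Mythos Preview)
Your proposal is correct and follows essentially the same approach the paper indicates: the paper does not spell out a proof but states that Lemma \ref{LEM:Str} follows from Lemma \ref{LEM:HC} together with the unit-scale Bernstein inequality \eqref{Bern2}, referring to \cite{LM, Poc} for details, and your outline (moment bound via Lemma \ref{LEM:HC}, Minkowski to exchange norms, \eqref{Bern2} to pass from $L^r_x$ to $L^2_x$, low-frequency handling of $\sin(t|\nabla|)/|\nabla|$ giving the $\max(1,|a|,|b|)$ factor, Sobolev embedding for $r=\infty$, then Chebyshev plus optimization in $p$) is precisely that argument.
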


Lemma \ref{LEM:Str} plays an essential role
in the proof of Theorem \ref{THM:LWP}.
The proof of Lemma \ref{LEM:Str}
follows
from  Lemma \ref{LEM:HC} and \eqref{Bern2}.
See \cite{LM, Poc} for details.


The following proposition allows us to obtain a probabilistic estimate involving the 
$L^\infty_t$-norm
and plays an important role in establishing a
probabilistic energy bound.  See Proposition \ref{PROP:Penergy} below.

Define $\wt S(t)$ by 
\begin{equation}
 \wt S(t)(u_0, u_{1}) 
:=  -\frac{|\nb|}{\jb{\nb}}\sin (t|\nb|) u_{0} + \frac{ \cos (t|\nb|)}{\jb{\nb}}u_{1}.
\label{wlinear}
\end{equation}

\noi
Namely, we have $\dt   S(t)(u_0, u_{1}) =  \jb{\nb}\wt S(t)(u_0, u_{1}) $.

\begin{proposition}\label{PROP:infty}
Given a pair   $(u_0, u_1)$ of real-valued functions defined on $\R^3$,
let $(u_0^{\omega}, u_1^\omega)$ be the Wiener randomization defined in \eqref{R1}, satisfying \eqref{cond}. 
Let $T> 0$ and $S^*(t) = S(t)$ or $\wt S(t)$ defined in \eqref{Zlinear} and \eqref{wlinear}, 
respectively.
Then, for  $2\leq r\leq\infty$, we have 
\begin{align}
P\Big( \|S^*(t) (u_0^\o, u_1^\o) & \|_{L^\infty_t([0, T];  L^r_x(\R^3))} > \ld\Big)\notag \\
&  \leq C 
(1+T) \exp \Bigg( -c \frac{\ld^2}{\max(1, T^2 )\|(u_0, u_1)\|^2_{\mathcal{H}^\eps (\R^3)}}\Bigg)
\label{infty00}
\end{align}

\noi
for any $\eps > 0$, 
where the constants $C$ and $c$ depend only on $r$ and $\eps$.

\end{proposition}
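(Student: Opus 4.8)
The plan is to control the $L^\infty_t$-norm by first passing to a discrete set of times via a Sobolev-type inequality in $t$, and then applying the local-in-time probabilistic Strichartz estimates (Lemma \ref{LEM:Str}) on each unit subinterval, finally summing the resulting tail bounds. The starting point is the one-dimensional Sobolev embedding $W^{1/q + \eps_0, q}(J) \hookrightarrow L^\infty(J)$ (valid for $q$ large enough depending on $\eps_0$), or more simply the elementary bound $\|f\|_{L^\infty_t(J)}^q \lesssim \|f\|_{L^q_t(J)}^{q-1}\|\partial_t f\|_{L^q_t(J)} + |J|^{-1}\|f\|_{L^q_t(J)}^q$ applied on each interval $J = [k, k+1]\cap[0,T]$, $k = 0, 1, \dots, \lfloor T\rfloor$. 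Taking an $L^r_x$ norm first and then the $L^\infty_t$ norm, and using Minkowski's inequality to move $L^r_x$ inside, one reduces matters to estimating $\|S^*(t)(u_0^\o, u_1^\o)\|_{L^q_tL^r_x(J)}$ and $\|\partial_t S^*(t)(u_0^\o, u_1^\o)\|_{L^q_tL^r_x(J)}$ on each such $J$ for a fixed large finite $q$.

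The key observation for the time derivative is the identity $\partial_t S(t)(u_0,u_1) = \jb{\nabla}\wt S(t)(u_0,u_1)$ recorded just before the statement, together with the analogous fact that $\partial_t \wt S(t)(u_0, u_1) = -|\nabla|^2\jb{\nabla}^{-1}\cos(t|\nabla|)u_0 - |\nabla|\jb{\nabla}^{-1}\sin(t|\nabla|)u_1$, which is again of the form $S^{**}(t)(\jb{\nabla}^{\theta}u_0, \jb{\nabla}^{\theta}u_1)$ for some linear-wave-type propagator $S^{**}$ bounded on the relevant spaces and some $\theta \le 1$. Thus, up to harmless Fourier multipliers that are bounded on $L^2$ and commute with the Wiener randomization (they act diagonally on the unit-cube pieces $\psi(D-n)$, uniformly in $n$, by \eqref{Bern2}), both $S^*(t)(u_0^\o, u_1^\o)$ and $\partial_t S^*(t)(u_0^\o, u_1^\o)$ are of the form (linear wave propagator)$(\phi_0^\o, \phi_1^\o)$ with $\|(\phi_0,\phi_1)\|_{\mathcal{H}^\eps} \lesssim \|(u_0,u_1)\|_{\mathcal{H}^\eps}$. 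For $r < \infty$ one then applies Lemma \ref{LEM:Str}(ii.a) with $s = 0$ on the interval $J = [k, k+1]$, for which $\max(1, |a|^2, |b|^2) \lesssim \max(1, T^2)$ and $|J| = 1$; for $r = \infty$ one applies Lemma \ref{LEM:Str}(ii.b) with $s = \eps > 0$ (this is precisely where the $\mathcal{H}^\eps$ rather than $\mathcal{H}^0$ regularity is forced). In either case one obtains, for each $k$,
\[
P\Big(\|S^*(t)(u_0^\o,u_1^\o)\|_{L^\infty_t(J_k; L^r_x)} > \tfrac{\ld}{2}\Big)
\le C\exp\Bigg(-c\frac{\ld^2}{\max(1, T^2)\|(u_0,u_1)\|_{\mathcal{H}^\eps}^2}\Bigg),
\]
with $C, c$ depending only on $r$ and $\eps$, and similarly for the $\partial_t$ piece needed to run the Sobolev-in-time argument.

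Finally, a union bound over the $O(1 + T)$ subintervals $J_0, \dots, J_{\lfloor T\rfloor}$ produces the factor $(1+T)$ in front of the exponential in \eqref{infty00}, completing the estimate. The main technical point — and the only place requiring genuine care — is the reduction from the $L^\infty_t$ norm to $L^q_t$ norms of the function and its time derivative: one must check that the time derivative of $\wt S(t)$ (and of $S(t)$) can indeed be written with only bounded multipliers and at most one extra derivative, so that no regularity beyond $\mathcal{H}^\eps$ is consumed and no $T$-dependence beyond $\max(1,T^2)$ enters. Everything else is a routine combination of the one-dimensional Sobolev embedding in the time variable, Minkowski's inequality, Lemma \ref{LEM:Str}, and a finite union bound.
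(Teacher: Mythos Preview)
There is a genuine gap in your reduction from $L^\infty_t$ to $L^q_t$. You propose to control $\|S^*(t)(u_0^\omega,u_1^\omega)\|_{L^\infty_t(J;L^r_x)}$ on a unit interval $J$ via a Sobolev or Gagliardo--Nirenberg inequality in time, which forces you to estimate $\|\partial_t S^*(t)(u_0^\omega,u_1^\omega)\|_{L^q_t(J;L^r_x)}$. But $\partial_t S(t)(u_0,u_1)=S(t)(u_1,-\Delta u_0)$, and Lemma~\ref{LEM:Str} applied to this pair requires $(u_1,-\Delta u_0)\in\mathcal{H}^0$, i.e.\ $u_1\in L^2$ and $u_0\in \dot H^1$. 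For data only in $\mathcal{H}^\eps$ with $\eps<1$, the quantity $\|\partial_t S^*(t)(u_0^\omega,u_1^\omega)\|_{L^q_tL^r_x}$ is in general infinite almost surely, so neither the Sobolev embedding nor the product bound $\|f\|_{L^q}^{1-1/q}\|f'\|_{L^q}^{1/q}$ yields anything. Your sentence ``at most one extra derivative, so that no regularity beyond $\mathcal{H}^\eps$ is consumed'' is self-contradictory: one extra derivative is exactly one derivative too many.

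The missing idea is a frequency localization \emph{before} the time regularity step. If you first apply a Littlewood--Paley decomposition and work with $\mathbf{P}_N S^*(t)(u_0^\omega,u_1^\omega)$, then by Bernstein the time derivative costs only a factor $N$ at frequency $N$, which is finite. Gagliardo--Nirenberg then gives $\|\mathbf{P}_N f\|_{L^\infty_tL^r_x}\lesssim N^{1/q}\|\mathbf{P}_N f\|_{L^q_tL^r_x}$, and choosing $q$ large enough that $1/q<\eps$ lets you square-sum in $N$ against the $\mathcal{H}^\eps$ norm. This is precisely what the paper's proof accomplishes, though by a direct chaining argument rather than Gagliardo--Nirenberg: the estimate $\min(1,2^{-k}N)$ in \eqref{infty7} encodes both the frequency localization and the $C^1$-in-time bound at frequency $N$, and the balance between $\sqrt{q_k}$ and $2^{-k}N$ in \eqref{infty8}--\eqref{infty9b} plays the role of choosing $q$ large. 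Your union bound over $O(1+T)$ unit intervals is correct and matches the paper's deduction of Proposition~\ref{PROP:infty} from Lemma~\ref{LEM:infty}; the gap is entirely in the single-interval estimate.
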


Proposition \ref{PROP:infty}  follows as a corollary to the following lemma.
Let  $S_+(t)$ and $S_-(t)$ be the linear propagators for
the half wave equations defined by 
\[ S_{\pm}(t) \phi := 
\mathcal{F}^{-1} \big(e^{\pm i|\xi|t} \ft \phi (\xi)\big).\]

\noi
Given $\phi \in H^s(\R^3)$, 
we define its randomization  $\phi^\o$ by 
\begin{align*}
\phi^\omega := 
\sum_{n \in \Z^d} g_{n, 0} (\omega) \psi(D-n) \phi
\end{align*}

\noi
as in  the first component of \eqref{R1}.
Then, we have the following tail estimate on 
the size of 
$S_\pm(t) \phi^\o$
over a time interval of length 1.

\begin{lemma}\label{LEM:infty}
Let $j \in \mathbb{N}\cup\{0\}$
and   $2\leq r \leq \infty$.
Given 
 any $\eps > 0$, 
there exist constants $C, c>0$, 
depending only on $r$ and $\eps$, 
such that
\begin{align}
 P\Big( \|S_\pm(t) \phi^\o\|_{L^\infty_t([j, j+1];  L^r_x(\R^3))} > \ld\Big)
&  \leq C \exp \Bigg( -c \frac{\ld^2}{\|\phi\|^2_{H^\eps (\R^3)}}\Bigg), 
 \label{infty0a}\\
 P\Bigg( \bigg\|\frac{\sin(t |\nb|)}{|\nb|} \phi^\o \bigg\|_{L^\infty_t([j, j+1];  L^r_x(\R^3))} > \ld\Bigg)
&  \leq C \exp \Bigg( -c \frac{\ld^2}{\max(1, j^2)\|\phi\|^2_{H^{\eps-1} (\R^3)}}\Bigg).
 \label{infty0b}
\end{align}

\end{lemma}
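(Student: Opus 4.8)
The plan is to reduce the $L^\infty_t$ control to a control at a single time together with a control of the time derivative, via the fundamental theorem of calculus, and then to apply the probabilistic Strichartz estimates of Lemma \ref{LEM:Str} on the fixed-length interval $[j,j+1]$. Fix such an interval $I_j = [j, j+1]$ and set $w(t) := S_\pm(t)\phi^\o$. For a smooth function of $t$ with values in $L^r_x$, one has the one-dimensional Sobolev-type embedding
\begin{equation*}
\|w\|_{L^\infty_t(I_j; L^r_x)}^2 \lesssim \|w\|_{L^2_t(I_j; L^r_x)}^2 + \|w\|_{L^2_t(I_j; L^r_x)}\|\partial_t w\|_{L^2_t(I_j; L^r_x)},
\end{equation*}
so it suffices to obtain tail bounds for $\|w\|_{L^2_t(I_j; L^r_x)}$ and $\|\partial_t w\|_{L^2_t(I_j; L^r_x)}$. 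The key gain is that $\partial_t S_\pm(t)\phi^\o = \pm i |\nabla| S_\pm(t)\phi^\o = S_\pm(t)(\pm i|\nabla|\phi)^\o$, which is again a half-wave evolution of a randomized function, now of $|\nabla|\phi$. Writing $|\nabla|\phi = \langle\nabla\rangle\phi - (\langle\nabla\rangle - |\nabla|)\phi$ and noting that $\langle\nabla\rangle - |\nabla|$ is a bounded Fourier multiplier, we see $\|\,|\nabla|\phi\,\|_{H^{\eps-1}} \lesssim \|\phi\|_{H^\eps}$, so both $\phi$ (at regularity $H^\eps$) and its derivative reduce to half-wave evolutions of $H^\eps$-randomized data.

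Next I would record a half-wave analogue of Lemma \ref{LEM:Str}(ii): since $S_\pm(t)\phi^\o = \mathcal F^{-1}(e^{\pm i|\xi|t}\widehat{\phi^\o})$ and the Fourier multiplier $e^{\pm i|\xi|t}$ has modulus $1$, the same argument that proves Lemma \ref{LEM:Str} — Lemma \ref{LEM:HC} applied frequency-cube by frequency-cube together with Bernstein's inequality \eqref{Bern2} to pass from $L^2_x$ to $L^r_x$ on each unit cube — yields, for $1\le q<\infty$ and $2\le r\le\infty$,
\begin{equation*}
P\big(\|S_\pm(t)\phi^\o\|_{L^q_t(I; L^r_x)} > \ld\big) \le C\exp\Big(-c\,\frac{\ld^2}{|I|^{2/q}\,\|\phi\|_{H^\eps(\R^3)}^2}\Big)
\end{equation*}
for any $\eps>0$, with constants depending on $q, r, \eps$; here the admissibility requirement $r<\infty$ is not needed because on a single unit frequency cube Bernstein costs only a harmless constant, so $r=\infty$ is allowed. (This is exactly the mechanism already used for Lemma \ref{LEM:Str}(ii.b).) Applying this with $q = 2$ and $|I| = |I_j| = 1$ to both $\phi$ and $\pm i|\nabla|\phi$, and combining via the Sobolev embedding above and the elementary fact that $P(XY > \ld^2) \le P(X > \ld) + P(Y > \ld)$ together with $\max(a,b)\le a+b$ inside the exponential, gives \eqref{infty0a}. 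For \eqref{infty0b} the only change is that the propagator is $\frac{\sin(t|\nabla|)}{|\nabla|}$; writing it as $\frac{1}{2i}\big(\frac{e^{it|\nabla|}}{|\nabla|} - \frac{e^{-it|\nabla|}}{|\nabla|}\big)$ and using $\partial_t\big(\frac{\sin(t|\nabla|)}{|\nabla|}\phi^\o\big) = \cos(t|\nabla|)\phi^\o$, one reduces to half-wave evolutions of $\frac{\phi}{|\nabla|}$ (at regularity $H^{\eps-1}$, hence the $\|\phi\|_{H^{\eps-1}}$ on the right) and of $\phi$ itself; the factor $\max(1,j^2)$ enters because on the low-frequency cube $Q_0$ the multiplier $\frac{1}{|\nabla|}$ is unbounded, but on the interval $[j,j+1]$ the kernel identity $\frac{\sin(t|\xi|)}{|\xi|} = \int_0^t \cos(s|\xi|)\,ds$ shows $\|\frac{\sin(t|\nabla|)}{|\nabla|}\P_{Q_0}\phi^\o\|_{L^\infty_t(I_j)} \lesssim (1+j)\|\P_{Q_0}\phi^\o\|$, which is the source of the $j$-dependence (this is the standard way the $\max(1,|a|^2,|b|^2)$ factor arises in Lemma \ref{LEM:Str}(ii)).

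The main obstacle, and the only genuinely delicate point, is the passage from an $L^q_tL^r_x$ tail bound to an $L^\infty_tL^r_x$ tail bound: one cannot simply take $q\to\infty$ in Lemma \ref{LEM:Str} since the constant there degenerates. The device above — trading $L^\infty_t$ for $L^2_t$ of the function plus $L^2_t$ of its time derivative, then observing that differentiating a half-wave (or $\cos/\sin$) evolution in $t$ costs exactly one derivative on the data, which is absorbed by taking $\eps$ regularity rather than $0$ regularity — is precisely what makes this work, and it is also the origin of the harmless $\eps$-loss in the statement of Proposition \ref{PROP:infty}. Once Lemma \ref{LEM:infty} is in hand, Proposition \ref{PROP:infty} follows by writing $S(t)$ and $\wt S(t)$ as linear combinations of $S_\pm(t)$ and $\frac{\sin(t|\nabla|)}{|\nabla|}$ acting on $u_0^\o$ and $u_1^\o$, summing the estimates \eqref{infty0a}--\eqref{infty0b} over the $\lceil T\rceil$ unit subintervals $[j,j+1]$ of $[0,T]$ (a union bound producing the prefactor $1+T$), and bounding $\max(1,j^2) \le \max(1,T^2)$ and $\|u_1\|_{H^{\eps-1}} \le \|(u_0,u_1)\|_{\mathcal H^\eps}$.
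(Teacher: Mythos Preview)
Your approach has a genuine gap: the Sobolev-in-time device
\[
\|w\|_{L^\infty_t(I_j;L^r_x)}^2 \lesssim \|w\|_{L^2_t(I_j;L^r_x)}^2 + \|w\|_{L^2_t(I_j;L^r_x)}\|\dt w\|_{L^2_t(I_j;L^r_x)}
\]
costs a \emph{full} time derivative, not an $\eps$. Since $\dt S_\pm(t)\phi^\o = S_\pm(t)(\pm i|\nabla|\phi)^\o$, your half-wave Strichartz estimate applied to $\dt w$ puts $\|\,|\nabla|\phi\,\|_{L^2}$ (for $r<\infty$) or $\|\,|\nabla|\phi\,\|_{H^\eps}$ (for $r=\infty$) on the right-hand side---that is, $\|\phi\|_{H^1}$ or $\|\phi\|_{H^{1+\eps}}$, not $\|\phi\|_{H^\eps}$. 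Your observation that $\|\,|\nabla|\phi\,\|_{H^{\eps-1}}\lesssim\|\phi\|_{H^\eps}$ is correct but irrelevant, since the probabilistic Strichartz bound you invoke does not accept a negative-regularity norm on the right. So your argument proves \eqref{infty0a} only with $H^{1+}$ in place of $H^\eps$; fed into Proposition~\ref{PROP:Penergy} this would force $s>1$, i.e.\ above the energy space, which defeats the purpose.

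The paper avoids this full-derivative loss by a chaining (Kolmogorov continuity--type) argument. One writes $z^\o_\pm(t)$ as a telescoping sum of increments $z^\o_\pm(t_{\ell_k,k})-z^\o_\pm(t_{\ell'_{k-1},k-1})$ over dyadic time scales $2^{-k}$ and combines this with a Littlewood--Paley decomposition in $x$. At scale $k$ there are $\sim 2^k$ points; one controls the maximum by an $\ell^{q_k}$-sum with $q_k\sim\max(k,p)$ so that $(2^k)^{1/q_k}\lesssim 1$, and Lemma~\ref{LEM:HC} contributes only $\sqrt{q_k}\sim\sqrt{k}\cdot\sqrt{p}$. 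The increment at frequency $N$ and time step $2^{-k}$ is $\lesssim\min(1,2^{-k}N)$ by the mean value theorem on the Fourier side. Summing over $k$ then costs only $(\log N)^{3/2}$, which is absorbed by $N^\eps$. This is exactly where the $\eps$-loss comes from---it is a logarithmic loss, not a loss of a full derivative---and the crude $W^{1,2}_t\hookrightarrow L^\infty_t$ embedding cannot reproduce it.
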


Assuming Lemma \ref{LEM:infty}, 
we first present the proof of Proposition \ref{PROP:infty}.

\begin{proof}[Proof of Proposition \ref{PROP:infty}]

We only consider the case $S^*(t) = S(t)$ and $T\geq 1$.
When $S^*(t) = \wt S(t)$, \eqref{infty00} holds without the factor $T^2$ in the exponent.
By subadditivity and Lemma \ref{LEM:infty}, we have
\begin{align*}
P\Big( \|S(t)  (u_0^\o, u_1^\o)& \|_{L^\infty_t([0, T];  L^r_x(\R^3))} > \ld\Big)
 \leq 
P\Big(\max_{j = 0, \dots, [T]}\|S(t) (u_0^\o, u_1^\o)\|_{L_t^\infty([j, j+1]; L^r_x(\R^3))} > \ld\Big)\\
& \leq 
\sum_{j = 0}^{[T]}
P\Big(\|S(t) (u_0^\o, u_1^\o)\|_{L_t^\infty([j, j+1]; L^r_x(\R^3))} > \ld\Big)\\
& \leq 
\sum_{j = 0}^{[T]}
P\bigg(\|\cos (t|\nb|) u_0^\o\|_{L_t^\infty([j, j+1]; L^r_x(\R^3))} > \frac{\ld}{2}\bigg)\\
& \hphantom{XXX}
+
\sum_{j = 0}^{[T]}
P\Bigg(\bigg\|\frac{\sin (t|\nb|)}{|\nb|} u_1^\o\bigg\|_{L^\infty([j, j+1]; L^r_x(\R^3))} > \frac{\ld}{2}\Bigg)\\
& \leq C([T]+1) \exp \Bigg( -c \frac{\ld^2}{T^2 \|(u_0, u_1) \|^2_{\mathcal{H}^\eps(\R^3)}}\Bigg).
\end{align*}

\noi
Here, $[T]$ denotes the integer part of $T$.
\end{proof}

Finally, we prove Lemma \ref{LEM:infty}.

\begin{proof}[Proof of Lemma \ref{LEM:infty}]
 We first prove \eqref{infty0a}.
Define $z_\pm^\o(t)$ and $z_\pm(t)$ by 
\[z_\pm^\o(t) := S_\pm(t) \phi^\o\quad \text{and}
\quad
z_\pm(t) := S_\pm(t) \phi.\]

\noi
{\bf Part 1\,(a):}
We first consider the case $r < \infty$.
Without loss of generality, assume $j = 0$. 
For $k \in \mathbb{N} \cup \{0\}$, 
let $\{ t_{\l, k}:\l = 0, 1,\dots, 2^k\} $ be $2^{k}+1$ equally spaced points on $[0, 1]$, 
i.e.~$t_{0,k}=0$ and $t_{\l, k} - t_{\l-1, k} = 2^{-k}$ for $\l =1, \dots, 2^k$.
Then, given $t \in [0, 1]$, 
we have
\begin{align}
  z_\pm^\o(t)
= \sum_{k = 1}^\infty \big(z_\pm^\o(t_{\l_k, k}) - z_\pm^\o(t_{\l_{k-1}, k-1})\big)
+ z_\pm^\o(0)
\label{infty2}
\end{align}

\noi
for some $\l_k = \l_k(t) \in \{0, \dots, 2^k\}$.\footnote{Think of the binary expansion of this given $t \in [0, 1]$.
Then, $t_{\l_k, k}$ can be given by the partial sum of this binary expansion up to order $k$.}

We consider the $L^r_x$-norm with the (inhomogeneous) Littlewood-Paley decomposition.
Then, by the square function estimate and Minkowski's integral inequality, we have
\begin{align}
 \| z_\pm^\o(t)\|_{L^r_x}
 \sim \bigg\|\Big(\sum_{\substack{N \geq 1\\\text{dyadic}}}
 |\P_N z_\pm^\o(t) |^2 \Big)^\frac{1}{2}\bigg\|_{L^r_x}
\leq
\Big(\sum_{\substack{N \geq 1\\\text{dyadic}}}
 \|\P_N z_\pm^\o(t) \|_{L^r_x}^2 \Big)^\frac{1}{2}.
\label{infty3}
 \end{align}

\noi
Then, from \eqref{infty2} and \eqref{infty3}, we have
\begin{align*}
 \| z_\pm^\o\|_{L^\infty_t([0, 1]; L^r_x)}
\les
\bigg(\sum_{\substack{N \geq 1\\\text{dyadic}}}
\Big( \sum_{k =1}^\infty
 \max_{0\leq \l_k \leq 2^k} 
\big\|\P_N 
 \big(z_\pm^\o(t_{\l_k, k}) - z_\pm^\o(t_{\l'_{k-1}, k-1})\big)
\big\|_{L^r_x} \Big)^2\bigg)^\frac{1}{2}
+ \|z_\pm^\o(0)\|_{L^r_x},
 \end{align*}

\noi
where $t_{\l'_{k-1}, k-1}$ is one of the $2^{(k-1)}$+1 equally spaced points such that 
\begin{align}
|t_{\l_k, k} - t_{\l'_{k-1}, k-1}| \leq 2^{-k}.
\label{infty3a}
\end{align}

\noi
Hence, for  $p \geq 2$, we have
\begin{align}
\Big(\E\big[  \| z_\pm^\o& \|_{L^\infty_t([0, 1]; L^r_x)}\big]^p\Big)^\frac{1}{p} \notag \\
& \les
\Bigg(\sum_{\substack{N \geq 1\\\text{dyadic}}}
\bigg( \sum_{k = 1}^\infty
\Big(\E \Big[
 \max_{0\leq \l_k \leq 2^k} 
 \big\|\P_N 
 \big(z_\pm^\o(t_{\l_k, k}) - z_\pm^\o(t_{\l'_{k-1}, k-1})\big)
 \big\|_{L^r_x}
 \Big]^p
  \Big)^\frac{1}{p}\bigg)^2\Bigg)^\frac{1}{2}\notag \\
& + 
\Big(\E\big[ \|z_\pm^\o(0)\|_{L^r_x}\big]^p\Big)^\frac{1}{p}.
\label{infty4}
\end{align}

\noi
Note that it follows from Lemma \ref{LEM:HC}
and \eqref{Bern2}
that the second term on the right-hand side of \eqref{infty4} can be bounded by
\begin{align}
\Big(\E\big[ \|z_\pm^\o(0)\|_{L^r_x}\big]^p\Big)^\frac{1}{p}
\les \sqrt p \|\phi\|_{L^2_x}
\label{infty5}
\end{align}

\noi
for $p \geq r$. 
	
In the following, we first estimate	
\begin{align*}
I_N: =  \sum_{k = 1}^\infty
\bigg(\E \Big[
 \max_{0\leq \l_k \leq 2^k} 
 \big\|\P_N 
 \big(z_\pm^\o(t_{\l_k, k}) - z_\pm^\o(t_{\l'_{k-1}, k-1})\big)
 \big\|_{L^r_x}
 \Big]^p
  \bigg)^\frac{1}{p}
\end{align*}
	
\noi
for each  dyadic $N \geq 1$.
Let 
\begin{align} 
q_k  := \max (\log 2^k, p, r)
\sim \log 2^k + p.
\label{infty5a}
\end{align}

\noi
Then, we have
\begin{align}
I_N 
& \leq  \sum_{k = 1}^\infty
\bigg(
 \sum_{\l_k = 0}^{2^k} 
 \E 
 \big\|\P_N 
 \big(z_\pm^\o(t_{\l_k, k}) - z_\pm^\o(t_{\l'_{k-1}, k-1})\big)
 \big\|_{L^r_x}^{q_k}
  \bigg)^\frac{1}{q_k}\notag\\
\intertext{Noting  that $(2^k+1)^\frac{1}{q_k} \les 1$
and applying  Lemma \ref{LEM:HC}, 
}
& \les  \sum_{k = 1}^\infty
 \max_{0\leq \l_k \leq 2^k} 
\Big(\E
 \big\|\P_N 
 \big(z_\pm^\o(t_{\l_k, k}) - z_\pm^\o(t_{\l'_{k-1}, k-1})\big)
 \big\|_{L^r_x}^{q_k}
  \Big)^\frac{1}{q_k} \notag\\
& \les  \sum_{k = 1}^\infty
\sqrt{q_k}
 \max_{0\leq \l_k \leq 2^k} 
\Big\|
 \big\| \P_N \psi(D-n) 
 \big(z_\pm(t_{\l_k, k}) - z_\pm(t_{\l'_{k-1}, k-1})\big)
 \big\|_{L^r_x}
 \Big\|_{\l^2_{|n|\sim N}}\notag
 \intertext{By \eqref{Bern2}, we have}
& \les  \sum_{k = 1}^\infty
\sqrt{q_k}
 \max_{0\leq \l_k \leq 2^k} 
\Big\|
 \big\|\P_N \psi(D-n) 
 \big(z_\pm(t_{\l_k, k}) - z_\pm(t_{\l'_{k-1}, k-1})\big)
 \big\|_{L^2_x}
 \Big\|_{\l^2_{|n|\sim N}}.
\label{infty6}
\end{align}

Then, by \eqref{infty3a} we have 
\begin{align}
 \big\|\P_N  \psi(D-n) 
&   \big(z_\pm(t_{\l_k, k}) - z_\pm(t_{\l'_{k-1}, k-1})\big)
 \big\|_{L^2_x} \notag \\
&   = \bigg(\int_{|\xi|\sim N}
 \Big|e^{\pm i|\xi| t_{\l_k, k} }
-  e^{\pm i|\xi|  t_{\l'_{k-1}, k-1}}
 \Big|^2 |\psi(\xi - n) \ft \phi(\xi)|^2 d\xi \bigg)^\frac{1}{2}\notag\\
&  \les
\min(1, 2^{-k }N)
 \big\|\P_N  \psi(D-n) 
\phi \big\|_{L^2_x}.
\label{infty7}
\end{align}
	
\noi
Hence, from \eqref{infty6} and \eqref{infty7}, it follows that
\begin{align}
I_N 
& \les  \sum_{k = 1}^\infty
\sqrt{q_k}\min(1, 2^{-k }N)
\|\P_N \phi\|_{L^2_x}. \label{infty8}
\end{align}

Now, we separate the summation into $2^{-k}N \geq 1$
and $2^{-k}N <  1$
and estimate the contribution from each case.
Note that
 from \eqref{infty5a}, we have
\begin{align}
\sqrt q_k \les \sqrt{\log 2^k}\cdot \sqrt p.
\label{infty5x}
\end{align}

\medskip

\noi
$\bullet$ {\bf Case 1:} $2^{-k}N \geq  1$.

In this case,
 from \eqref{infty5x}, we have
\begin{align*}
\sqrt q_k \les \sqrt{\log N}\cdot \sqrt p
\end{align*}

 \noi
 Hence, we have
 \begin{align}
 \eqref{infty8}
& \leq C_r (\log N)^\frac{3}{2}
\sqrt{p}
\|\P_N \phi\|_{L^2}
 \leq C_{r, \eps}  
\sqrt{p}
\|\P_N \phi\|_{H^\eps} \label{infty9a}
\end{align}

\noi
for any $\eps > 0$.

\medskip

\noi
$\bullet$ {\bf Case 2:} $2^{-k}N <  1$.

From \eqref{infty5x}, we have 
\begin{align}
 \eqref{infty8}
& \leq C_r \sqrt{p}  \sum_{k \ges  \log N}^\infty
(\log 2^k)^\frac{1}{2}
2^{-k }N
\|\P_N \phi\|_{L^2}
 \les  C_r \sqrt{p} 
( \log N)^\frac{1}{2}
\|\P_N \phi\|_{L^2}\notag\\
& \leq 
C_{r, \eps} \sqrt{p}
\|\P_N \phi\|_{H^\eps} \label{infty9b}
\end{align}

\noi
for any $\eps > 0$.

Finally, putting \eqref{infty4},  \eqref{infty5}, \eqref{infty8}, \eqref{infty9a}, and \eqref{infty9b}
together, we obtain 
\begin{align*}
\Big(\E\big[  \| z^\o \|_{L^\infty_t([0, 1]; L^r_x)}\big]^p\Big)^\frac{1}{p}
 \leq C_{r, \eps}
\sqrt{p}
\|\phi\|_{H^\eps_x} 
\end{align*}

\noi
for all $p \geq r$ and $\eps > 0$.
The rest follows from a standard argument
using Chebyshev's inequality.

\smallskip

\noi
{\bf Part 1\,(b):}
Next, we consider the case $r = \infty$.
Then, it follows from Sobolev embedding that, 
given any $\eps>0$,  there exists  large $\tilde r \gg1$ with $\eps \wt{r} > 3$
such that 
\begin{align*}
P\Big( \|S_\pm(t)  \phi^\o\|_{L^\infty_t([j, j+1];  L^\infty_x(\R^3))} > \ld\Big)
 \leq P\Big( \|\jb{\nb}^\eps S_\pm(t) \phi^\o\|_{L^\infty_t([j, j+1];  L^{\wt{r}}_x(\R^3))} > C \ld\Big).
\end{align*}

\noi
Then, the rest follows from the argument in Part 1 (a).

\smallskip

\noi
{\bf Part 2:} Next, we briefly discuss how to prove \eqref{infty0b} when $r < \infty$.
Letting 
\[Z^\o(t): = \frac{\sin(t |\nb|)}{|\nb|} \phi^\o 
\quad \text{and}
\quad 
Z(t): = \frac{\sin(t |\nb|)}{|\nb|} \phi \]
and repeating the argument in Part 1
(but on $[j, j + 1]$ instead of $[0, 1]$), we have 
\begin{align*}
\Big(\E\big[  \| Z^\o& \|_{L^\infty_t([j, j+1]; L^r_x)}\big]^p\Big)^\frac{1}{p} \notag \\
& \les
\Bigg(\sum_{\substack{N \geq 1\\\text{dyadic}}}
\bigg( \sum_{k = 1}^\infty
\Big(\E \Big[
 \max_{0\leq \l_k \leq 2^k} 
 \big\|\P_N 
 \big(Z^\o(t_{\l_k, k}) -Z^\o(t_{\l'_{k-1}, k-1})\big)
 \big\|_{L^r_x}
 \Big]^p
  \Big)^\frac{1}{p}\bigg)^2\Bigg)^\frac{1}{2}\notag \\
& + 
\Big(\E\big[ \|Z^\o(j)\|_{L^r_x}\big]^p\Big)^\frac{1}{p}
= : \I + \II.
\end{align*}

\noi
When $j = 0$, then we have $\II = 0$.
When $j \geq 1$, we argue as in the proof of Proposition 2.3 (ii) in \cite{Poc}
and obtain 
\begin{align}
\II \les \sqrt p \max (1, j) \|\phi\|_{H^{-1}}
\label{infty10a}
\end{align}

\noi
for $p \geq r$. 
As for $\I$, we simply repeat the computations in Part 1
with a modification in \eqref{infty7}:
\begin{align*}
 \big\|\P_N  \psi(D-n) 
&   \big(Z(t_{\l_k, k}) - Z(t_{\l'_{k-1}, k-1})\big)
 \big\|_{L^2_x} \notag \\
&   \sim \Bigg(\int_{|\xi|\sim N}
 \bigg|\frac{e^{\pm i|\xi| t_{\l_k, k} }
-  e^{\pm i|\xi|  t_{\l'_{k-1}, k-1}}}{\xi}
 \bigg|^2 |\psi(\xi - n) \ft \phi(\xi)|^2 d\xi \Bigg)^\frac{1}{2}\notag\\
&  \les
\min(N^{-1}, 2^{-k})
 \big\|\P_N  \psi(D-n) 
\phi \big\|_{L^2_x}.
\end{align*}

\noi
This modification yields
\begin{align}
\I \les C_{r, \eps} \sqrt p  \|\phi\|_{H^{\eps-1}}.
\label{infty10b}
\end{align}

\noi
Then, the desired estimate \eqref{infty0b} follows
from \eqref{infty10a} and \eqref{infty10b}.
\end{proof}


\section{Uniform probabilistic  energy bound
for approximating solutions}
\label{SEC:4}

Let $(u_0, u_1)\in \mathcal{H}^s(\R^3)$ with $\frac 12 < s< 1$.
Given $N \geq 1$ dyadic, 
define $u_{j, N}^\o$, $j = 0, 1$,  by 
\begin{equation}
   u_{j, N}^\o := \P_{\leq N} u_{j}^\o = \sum_{n \in \Z^3}g_{n,j}(\omega)\P_{\leq N} \psi(D-n) u_j.
\label{cutoffdata}
\end{equation}

\noi
Note that  we have
$(u_{0, N}^\o, u_{1, N}^\o) \in \mathcal{H}^\infty(\R^3)$.
Let $u_N$ be the smooth global-in-time solution to \eqref{NLW}
with initial data
\begin{align*}
(u_N, \dt u_N)|_{t = 0} = (u_{0, N}^\o, u_{1, N}^\o),
\end{align*}

\noi
and  denote by $z_N = z_N^\o$ and $v_N = v_N^\o$ 
the linear and nonlinear parts of $u_N$.
Namely, 
\begin{equation}
z_N (t) := S(t) (u_{0, N}^\o, u_{1, N}^\o)
\qquad \text{and}
\qquad 
v_N := u_N - z_N.
\label{Penergy0}
\end{equation}

\noi 
In particular, $v_N$ is the 
smooth global solution
to the following perturbed NLW:
\begin{align}
\begin{cases}
 \dt^2 v_N - \Dl v_N + ( v_N+z_N)^5 = 0,\\
 (v_N, \dt v_N) |_{t = 0} = (0, 0).
\end{cases}
\label{NLW1}
\end{align}

\noi
It follows from the conservation of the energy of $u_N$
and the unitarity of the linear propagator that 
we have $\| (v_N^\o,\partial_t v_N^\o) \|_{L^\infty(\R;  \dot{ \mathcal{H}}^1(\R^3))} \leq C(N, \o)  < \infty$
for each $N \in \mathbb{N}$.
There is, however, no uniform control on the size of $v_N$, independent of $N$, 
since the $\dot{H}^1$-norm of $z_N$ tends to infinity almost surely  as $N \to \infty$.

The following proposition 
establishes a probabilistic energy bound on $v_N$,
{\it independent} of dyadic $N \geq 1$,
and plays an important role in the proof of Theorem  \ref{THM:GWP}.

\begin{proposition}\label{PROP:Penergy}
Let $ s \in (\frac 12, 1)$ and $N \geq 1$ dyadic.
Given $T,  \eps > 0$, there exists $\wt{ \O}_{N, T, \eps}\subset \O$
such that 
\begin{itemize}
\item[(i)] $P(\wt \O_{N, T, \eps}^c) < \eps$, 
\item[(ii)]  
There exists a finite constant $C(T, \eps, \|(u_0, u_1)\|_{{\mathcal{H}}^s(\R^3)}) > 0$
such that
the following energy bound holds:
\begin{align}
\sup_{t \in [0, T]}\| (v^\o_N  (t),\partial_t v^\o_N  (t)) \|_{ \mathcal{H}^1(\R^3)} \leq C(T, \eps,  \|(u_0, u_1)\|_{{\mathcal{H}}^s(\R^3)}),
\label{Penergy1}
\end{align}

\noi
for all   solutions $v^\o_N$ to \eqref{NLW1} 
with  $\o \in \wt{\O}_{N, T, \eps}$.
\end{itemize}
	
\noi	
Note that  the constant $C(T, \eps,  \|(u_0, u_1)\|_{{\mathcal{H}}^s(\R^3)})$ 
is independent of dyadic $N \geq 1$.
\end{proposition}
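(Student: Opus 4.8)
\textbf{Proof strategy for Proposition \ref{PROP:Penergy}.}
The plan is to estimate directly the growth of the energy $E(v_N(t))$ defined in \eqref{Zenergy}, where $v_N$ solves the perturbed equation \eqref{NLW1}. Since $v_N$ is \emph{not} a solution to the unperturbed NLW, $E(v_N)$ is not conserved; differentiating in time and using \eqref{NLW1} gives
\begin{align*}
\frac{d}{dt} E(v_N(t))
= \int_{\R^3} \partial_t v_N \big( \partial_t^2 v_N - \Delta v_N + v_N^5\big)\, dx
= \int_{\R^3} \partial_t v_N\, \big(v_N^5 - (v_N + z_N)^5\big)\, dx.
\end{align*}
Expanding $(v_N+z_N)^5 - v_N^5$ yields a sum of monomials $v_N^a z_N^b$ with $a+b=5$, $b\ge 1$. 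First I would bound $\big|\frac{d}{dt}E(v_N(t))\big|$ by $\|\partial_t v_N\|_{L^2_x}$ times an $L^2_x$-norm of this error, and then distribute derivatives and integrability so that every factor of $v_N$ is measured in $\dot H^1_x \hookrightarrow L^6_x$ (controlled by $E(v_N)^{1/2}$) or in $L^2_x$, while every factor of $z_N$ is placed in a Lebesgue space where the probabilistic Strichartz estimates (Lemma \ref{LEM:Str}) and — crucially — the new $L^\infty_t L^\infty_x$-type bound of Proposition \ref{PROP:infty} apply. The key point of using Proposition \ref{PROP:infty} is that the worst term $\int \partial_t v_N\, z_N^5\, dx$ and the terms with few powers of $v_N$ need $z_N$ in $L^\infty_x$ (or a very high $L^r_x$), which is exactly what Proposition \ref{PROP:infty} supplies with only $\eps$ derivatives, i.e.\ for data in $\mathcal H^\eps$, hence with an $N$-independent bound since $\|(u_{0,N}^\o,u_{1,N}^\o)\|_{\mathcal H^\eps}\le\|(u_0^\o,u_1^\o)\|_{\mathcal H^\eps}$ and the latter is almost surely finite with good tails.

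\textbf{Setting up the good event and the Gronwall argument.}
Concretely, given $T,\eps>0$ I would define $\wt\O_{N,T,\eps}$ to be the intersection of the events on which (a) $\|z_N\|_{L^q_t([0,T];L^r_x)}\le R$ for the finitely many admissible Strichartz pairs $(q,r)$ entering the estimate, and (b) $\|z_N\|_{L^\infty_t([0,T];L^r_x)}\le R$ for the relevant large $r$ (including $r=\infty$), where $R=R(T,\eps,\|(u_0,u_1)\|_{\mathcal H^s})$ is chosen large enough that, by Lemmas \ref{LEM:Str}, Proposition \ref{PROP:infty}, and the fact that $s>\tfrac12>\eps$ for $\eps$ small, each complementary probability is at most $\eps/(\text{number of conditions})$; note that all these tail bounds are uniform in $N$ because the randomized truncated data have $\mathcal H^\eps$- and $\dot{\mathcal H}^0$-norms bounded by those of the full randomized data. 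On $\wt\O_{N,T,\eps}$ the differential inequality becomes, after Hölder in space and absorbing all $z_N$ factors into $R$,
\begin{align*}
\frac{d}{dt} E(v_N(t)) \lesssim \big(1 + \|z_N(t)\|_{L^\infty_x}^5 + \cdots\big)\big(1 + E(v_N(t))\big),
\end{align*}
where the coefficient is an $L^1_t([0,T])$ (in fact bounded-by-$R$) function on the good event. Since $E(v_N(0))=0$, Gronwall's inequality then yields $\sup_{t\in[0,T]} E(v_N(t))\le C(T,\eps,\|(u_0,u_1)\|_{\mathcal H^s})$, and because $E$ controls $\|\nabla v_N\|_{L^2}^2 + \|\partial_t v_N\|_{L^2}^2 + \|v_N\|_{L^6}^6$, together with the fact that $v_N$ has zero data so $\|v_N(t)\|_{L^2}\le \int_0^t\|\partial_t v_N\|_{L^2}$ stays bounded, this gives the full $\mathcal H^1$-bound \eqref{Penergy1}.

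\textbf{Main obstacle.}
The delicate point — and the reason the naive argument of \cite{BT3, Poc} fails and one only gets $s>\tfrac12$ rather than $s>0$ — is the bookkeeping of integrability exponents in the terms of $(v_N+z_N)^5-v_N^5$ that carry the fewest powers of $v_N$, especially $\int \partial_t v_N\, z_N^5\,dx$ and $\int \partial_t v_N\, v_N z_N^4\,dx$. For these, $v_N$ is only in $\dot H^1\hookrightarrow L^6$ and $\partial_t v_N$ only in $L^2$, so $z_N$ must be taken in $L^{10}_x$ or higher up to $L^\infty_x$; the usual probabilistic Strichartz bounds give $z_N$ in $L^q_t L^r_x$ only for finite $q$, and the fifth-power time integrability $\|z_N\|_{L^\infty_x}^5\in L^1_t$ cannot be reached by a finite-$q$ Strichartz norm. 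This is precisely the gap closed by Proposition \ref{PROP:infty}, which gives $z_N\in L^\infty_t L^r_x$ (all $r\le\infty$) with Gaussian tails at regularity $\eps$; the constraint $s>\tfrac12$ enters because after placing all $z_N$ factors optimally, the remaining $v_N$ factors must still be controlled by $E(v_N)$, and closing the Hölder exponents in the borderline monomial forces the regularity threshold $s>\tfrac12$ (one power of $v_N$ beyond $\dot H^1$ must be absorbed into an $L^2_x$-norm, which in turn requires the a priori $L^2$-bound on $v_N$ coming from the zero data and the $\partial_t v_N\in L^\infty_t L^2_x$ part of the energy). I would carry out this exponent count carefully term by term, checking that each of the monomials $v_N^a z_N^{5-a}$, $0\le a\le 4$, can be estimated with a coefficient in $L^1_t([0,T])$ on $\wt\O_{N,T,\eps}$ and a right-hand side at most linear in $1+E(v_N(t))$, which is exactly what makes Gronwall applicable.
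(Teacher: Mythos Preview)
Your proposal has a genuine gap: you have misidentified the problematic term. The monomials with many $z_N$-factors (such as $\int \partial_t v_N\, z_N^5\,dx$ and $\int \partial_t v_N\, v_N\, z_N^4\,dx$) are in fact the \emph{easy} ones --- the paper disposes of all of $z_N^2 v_N^3,\dots,z_N^5$ in a single line via H\"older and Young (see \eqref{E2}). The term that actually obstructs the direct Gronwall argument is the one with the \emph{fewest} $z_N$-factors, namely
\[
\int_{\R^3} \partial_t v_N \cdot 5\, z_N\, v_N^4\, dx.
\]
Here $\partial_t v_N\in L^2_x$ and $v_N^4\in L^{3/2}_x$ already exhaust the integrability ($\tfrac12+\tfrac23=\tfrac76>1$), so there is no room left for $z_N$ in any Lebesgue space --- not even $L^\infty_x$. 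Putting one copy of $v_N$ in $L^2_x$, as you suggest, does not help either: $\tfrac12+\tfrac12+\tfrac36=\tfrac32>1$. Thus your proposed inequality $\frac{d}{dt}E(v_N)\lesssim(\text{coeff})(1+E(v_N))$ cannot be obtained by direct H\"older on this term, and the argument as written does not close.

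The paper's fix is to write $5\,\partial_t v_N\, z_N\, v_N^4 = z_N\,\partial_t(v_N^5)$ and \emph{integrate by parts in time}, producing a harmless boundary term $\int z_N(t)\,v_N(t)^5\,dx$ (handled by Young with a small parameter, \eqref{E4}) and the new bulk term $\int_0^t\!\int \partial_t z_N\, v_N^5\,dx\,dt'$. Since $\partial_t z_N=\jb{\nabla}\wt z_N$, one must now transfer the full derivative from $\wt z_N$ onto $v_N^5$; the paper does this by a Littlewood--Paley decomposition, placing $\jb{\nabla}^{s-\delta}\wt z_N$ in $L^\infty_{t,x}$ (this is where Proposition~\ref{PROP:infty} is really used, applied to $\wt S(t)$ rather than $S(t)$) and absorbing the remaining $M_1^{1-s+\delta}$ on the highest-frequency factor of $v_N^5$ via interpolation between $L^2_x$ and $L^6_x$ (see \eqref{E4c}). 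It is precisely this interpolation step that forces $2(1-s+\delta)\le 1$, i.e.\ $s>\tfrac12$. Your account of how the threshold arises (one $v_N$ absorbed into $L^2_x$) is not the actual mechanism.
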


\noi
\begin{proof}

First, note that it suffices to prove
\begin{align}
\sup_{t \in [0, T]}\| (v^\o_N  (t), \partial_t v^\o_N  (t)) \|_{ \dot{\mathcal{H}}^1(\R^3)} \leq C\big(T, \eps,  \|(u_0, u_1)\|_{{\mathcal{H}}^s(\R^3)}\big).
\label{Penergy1a}
\end{align}

\noi
Indeed, \eqref{Penergy1} follows from  \eqref{Penergy1a}
and 
\begin{align}
\|v_N^\omega (t)\|_{L^2_x(\R^3)}
&=\bigg\|\int_0^t\pa_t v_N^\omega(t')dt'\bigg\|_{L^2_x} 
\leq T\big\|\pa_t v_N^\omega\big\|_{L^\infty_T L^2_x}
\leq C\big(T,\eps,\|(u_0,u_1)\|_{\mathcal{H}^s(\R^3))}\big).
\label{L2}
\end{align}

Let $z_N(t)$ be as in \eqref{Penergy0}
and  $\wt z(t) = \wt z^\o_N(t) := \wt S(t)(u^\o_{0, N}, u^\o_{1, N}) $
with $\wt S(t)$ defined in \eqref{wlinear}. 
Let $\dl > 0$ sufficiently small such that $\frac 12 + \dl < s$.
For fixed $T ,  \eps > 0$, 
we define $\wt \O_{N, T, \eps}$
by 
\[ \wt \O_{N, T, \eps}
= \big\{\o:\,  \|z_N^\o \|_{L^{10}_{T, x}}^{10} + \|z_N^\o \|_{L^\infty_TL^6_x}^6 
+ \|z_N^\o\|_{L^\infty_{T, x}}^2  
+ \|\wt z^\o_N \|_{L^6_{T, x}}^6\\
+ \big\|\jb{\nb}^{s-\delta}  \wt z_N^\o\big\|_{L^\infty_{T, x}} \leq \ld\big\},\]

\noi
where 
$\ld = \ld\big(T, \eps, \|(u_0, u_1)\|_{\mathcal{H}^s(\R^3)}\big) > 0$ is chosen such that 
$P(\wt \O_{N, T, \eps}^c) < \eps$.
Note that the existence of such $\ld(T, \eps)$
is guaranteed by
Lemma \ref{LEM:Str} 
and Proposition \ref{PROP:infty}.
Moreover, $\ld(T, \eps)$ can be chosen to be independent of $N$.

In the following, we prove 
\begin{align}
\sup_{t \in [0, T]}E (v^\o_N  (t))  \leq C\big(T, \eps,  \|(u_0, u_1)\|_{\mathcal{H}^s(\R^3)}\big)
\label{Penergy2}
\end{align}

\noi
for $\o \in \wt \O_{N, T, \eps}$.
Then, \eqref{Penergy1a} follows from the coercivity of the energy $E$.

For simplicity,  we denote $v_N^\o$ and $z_N^\o$ by $v$ and $z$,
in the following.
By differentiating $E(v)$ in time, we have
\begin{align}
\frac{d}{dt} E(v)(t)
& = \int_{\R^3}\dt v  (\dt^2 v  - \Dl v + v^5) dx
=
- \int_{\R^3}\dt v  \big((z+v)^5  - v^5\big) dx\notag \\
& = 
- \int_{\R^3}\dt v (5 z v^4 +  \mathcal N(z, v) ) dx, \notag
\end{align}

\noi
where
\[\N(z, v): = 10 z^2 v^3 + 10 z^3 v^2 + 5 z^4 v + z^5.\]

\noi
By integrating in time, we have
\begin{align}
E(v)(t)
&  = \underbrace{E(v)(0)}_{=0}
- \int_0^t\int_{\R^3}
\dt v (t') \big[5 z(t') v(t')^4 +\mathcal N(z, v)(t') \big] dx dt' \notag\\
&  = - 
\int_{\R^3} \int_0^t z (t') \dt ( v(t') ^5) dt' dx
- \int_0^t\int_{\R^3}
\dt v(t') \mathcal N(z, v)(t')  dx dt' \notag \\
& =:\I(t) +\II(t), \label{E1}
\end{align}
	
\noi
for $t \in [0, T]$.
Noting that
\[|\N(z, v)(t')| \les |z(t')^2 v(t')^3| + |z(t')|^5,\]

\noi
we have 
\begin{align}
|\II(t)| 
& \les \int_0^t \|\dt v(t)\|_{L^2_x} \|z(t') \|^2_{L^\infty_x}\|v(t') \|_{L^6_x}^3
dt'  
+ \int_0^T \|\dt v(t')\|_{L^2_x} \|z(t') \|^5_{L^{10}_x}
dt' \notag \\ 
& \les \big(1+ \|z \|^2_{L^\infty_{T, x}} \big)\int_0^t E(v)(t') dt' 
+ \|z \|^{10}_{L^{10}_{T, x}}.
\label{E2}
\end{align}

Next, we control the term $\I(t)$.
Note that $v(0) \equiv 0$ and $v= v^\o_N$ is smooth, both in $x$ and $t$.
Then, by integration by parts in time, we have
\begin{align}
\I(t) = 
 - \int_{\R^3}  z (t)  v(t) ^5 dx
+ \int_{\R^3} \int_0^t \dt z (t')  v(t') ^5dt' dx
=:\I_1(t) +\I_2(t).
\label{E3}
\end{align}

\noi
As for the first term $\I_1(t)$, we bound it by
\begin{align}
|\I_1 (t)|
\les   a^{-6}\|z (t)\|_{L^6_x}^6  + a^\frac{6}{5} \|v(t)\|^6_{L^6_x}
\les a^{-6} \|z\|_{L^\infty_T L^6_x}^6 
+ a^\frac{6}{5} E(v)(t).
\label{E4}
\end{align}

\noi
for some small constant $a > 0$ (to be chosen later).

It remains to estimate the second term $\I_2(t)$ in \eqref{E3}.
Noting that $z(t)$ solves the linear wave equation, 
we have
\begin{align}
\I_2 (t) & =  \int_{\R^3} \int_0^t \dt z (t')  v(t') ^5dt' dx
=  \int_0^t 
\int_{\R^3} \jb{\nabla} \wt z (t') \cdot  v(t') ^5 dx dt'.
\label{E4a}
\end{align}

\noi
Define $\mathcal{I}(t)$ by 
\begin{align*}
\mathcal I (t) :=  \int_{\R^3} \jb{\nabla} \wt z (t)  \cdot v(t) ^5 dx 
& \sim 
\sum_{k = -1}^1\sum_{\substack{M \geq 1\\ \text{dyadic}}}
M \int_{\R^3} \P_{2^k M} \wt z (t)  \P_M\big[ v(t) ^5\big] dx, 
\end{align*}

\noi
with the understanding that  $\P_{2^{-1}} = 0$.

\noi
$\bullet$ {\bf Case 1:} $M = 1$.

By Young's and Bernstein's inequalities, we have 
\begin{align}
|\I_2(t)| 
\les \|\wt z \|_{L^6_{T, x}}^6
+ \int_0^t \|v (t')  \|_{L^6_{x}}^6 dt'
\les \|\wt z \|_{L^6_{T, x}}^6
+ \int_0^t E(v)(t') dt'.
\label{E4b}
\end{align}

\smallskip

\noi
$\bullet$ {\bf Case 2:} $M \geq 2$.

We write
\[ v^5 
= \sum_{\substack{M_j \geq 1\\\text{dyadic}}}\prod_{j = 1}^5\P_{M_j}  v ,\]

\noi
and assume that $M_1 \geq M_2 \geq \cdots \geq M_5$
without loss of generality.
Note that we have 
$\P_{M} [ v(t)^5] = 0$
unless  $M_1 \ges M$.
With $M_1\ges M$ and using H\"older's inequality, we have 
\begin{align}
 |\mathcal I (t)|
 & \les 
\sum_{k = -1}^1 \sum_{\substack{M \geq 2\\\text{ dyadic}}} \sum_{\substack{M_1, \dots, M_5\\M_1 \ges M}}
\big\|\jb{\nb}^{s-\delta} \P_{2^k M}  \wt z(t)\big\|_{L^\infty_x}
M_1^{1-s+\delta} \bigg\| \prod_{j = 1}^5 \P_{M_j}v (t) \bigg\|_{L^1_x}  \notag \\
\intertext{Summing over dyadic $M, M_1, \dots, M_5$ (with a slight loss in a power of $M_1$) and applying Bernstein's inequality
followed by Young's inequality,}
& \les 
\sup_{M_1, \dots, M_5}
\big\|\jb{\nb}^{s-\delta}  \wt z(t)\big\|_{L^\infty_x}
M_1^{1-s+\delta+} \bigg\| \P_{M_1} v(t) \prod_{j = 2}^5 \P_{M_j}v  (t)\bigg\|_{L^1_x}  \notag  \\
& \les 
\sup_{M_1, \dots, M_5}
\big\|\jb{\nb}^{s-\delta}   \wt z(t) \big\|_{L^\infty_x}\Bigg\{
 \big\| M_1^{1-s+\delta+} \P_{M_1} v(t)\big\|_{L^3_x}^3  
 + \bigg\|\prod_{j = 2}^5 \P_{M_j}v (t) \bigg\|_{L^\frac{3}{2}_x}^\frac{3}{2} \Bigg\} \notag  \\
& \les 
\sup_{M_1, \dots, M_5}\big\|\jb{\nb}^{s-\delta}   \wt z(t)\big\|_{L^\infty_x}\Big\{
M_1^{3(1-s+\delta+)} \big\|  \P_{M_1} v(t)\big\|_{L^3_x}^3  
 + \prod_{j = 2}^5 \|\P_{M_j}v (t) \|_{L^6_x}^\frac{3}{2} \Big\}  \notag \\
& \les 
\sup_{M_1}\big\|\jb{\nb}^{s-\delta}   \wt z(t)\big\|_{L^\infty_x}\Big\{
  M_1^{3(1-s+\delta+)} \| \P_{M_1} v(t)\|_{L^3_x}^3  
 +  \|v (t) \|_{L^6_x}^6 \Big\}  \notag \\
\intertext{By interpolating $L^3$ between $L^2$ and $L^6$ and then applying Young's inequality,}
& \les 
\sup_{M_1} \big\|\jb{\nb}^{s-\delta}  \wt z(t)\big\|_{L^\infty_x}\Big\{
 \|   M_1^{2(1-s+\delta+)}  \P_{M_1} v\|_{L^2_x}^\frac{3}{2}  \|  \P_{M_1} v\|_{L^6_x}^\frac{3}{2}  
 +    E(v) \Big\} \notag  \\
& \les 
\sup_{M_1}
\big\|\jb{\nb}^{s-\delta}   \wt z(t)\big\|_{L^\infty_x}\Big\{
 \|   M_1^{2(1-s+\delta+)}  \P_{M_1} v\|_{L^2_x}^2 
 +  \|\P_{M_1} v  \|_{L^6_x}^6 + E(v) \Big\} \notag  \\
& \les 
\big\|\jb{\nb}^{s-\delta} \wt  z(t)\big\|_{L^\infty_x}E(v),
\label{E4c}
\end{align}

\noi
where the last inequality follows from Bernstein's inequality
as long as $2(1-s+\delta+) \leq 1$, i.e. $s > \frac{1}{2}+\delta$.

Hence, from \eqref{E4a}, \eqref{E4b}, and \eqref{E4c},  we obtain
\begin{align}
|\I_2 (t)| \les 
\|\wt z \|_{L^6_{T, x}}^6
+ 
\left(1+\big\|\jb{\nb}^{s-\delta}  \wt z\big\|_{L^\infty_{T, x}}\right)
\int_0^t 
E(v)(t') dt'.
\label{E5}
\end{align}

\noi
By choosing sufficiently small $a > 0$, 
it follows from
\eqref{E1}, \eqref{E2}, \eqref{E3}, \eqref{E4}, and \eqref{E5}
that
\begin{align*}
 E(v)(t)
\leq C_1(z, \wt z , T) + C_2(z, \wt z , T) \int_0^t E(v)(t') dt',
\end{align*}

\noi
for $t \in [0, T]$, where 
$C_1(z, \wt z, T)$ and  $C_2(z,\wt z,  T)$ satisfy 
\begin{align*}
C_1 (z, \wt z , T) & \sim \|z\|^{10}_{L^{10}_{T, x}} + \|z\|_{L^\infty_TL^6_x}^6
+ \|\wt z \|_{L^6_{T, x}}^6
 \leq \ld (T, \eps, \|(u_0,u_1)\|_{\mathcal{H}^s(\R^3)}) < \infty, \\
C_2 (z, \wt z, T) & \sim 1 + \|z\|^2_{L^\infty_{T, x}}  
+ \big\|\jb{\nb}^{s-\delta}  \wt z\big\|_{L^\infty_{T, x}} \leq \ld (T, \eps, \|(u_0,u_1)\|_{\mathcal{H}^s(\R^3)})< \infty.
\end{align*}

\noi
Finally, the energy bound \eqref{Penergy2} follows from Gronwall's inequality.
\end{proof}

\section{Deterministic analysis of the perturbed NLW}
\label{SEC:perturb}

In this section, we discuss  the deterministic component 
of  the proof of Theorem \ref{THM:GWP}.
Given a deterministic real-valued function $f$,
we consider the Cauchy problem of
the following perturbed defocusing quintic NLW:
\begin{align}
\begin{cases}
\pa_{t}^2v-\Delta v+(v+f)^5=0\\
(v, \dt v)|_{t = t_0} = (v_0, v_1).
\end{cases}
\label{pNLW}
\end{align}

\noi
 In this section,  we prove long time existence of solutions to \eqref{pNLW}
under some appropriate assumptions on $f$.

First, we briefly discuss the local well-posedness of \eqref{pNLW}.
If one applies the Strichartz estimates (Lemma \ref{LEM:Strichartz})
and a simple fixed point argument to 
prove local well-posedness of  \eqref{pNLW}
in the energy space, 
then the time of local existence depends on 
the profile of the initial data.
This, however, can be upgraded to the following
``good'' local well-posedness result, where 
the  time of local existence is characterized
only in terms of 
the $\dot {\mathcal{H}}^1$-norm of the initial data $(v_0,v_1)$
and the size of the perturbation $f$.

\begin{lemma}[Proposition 4.3 in \cite{Poc}]
\label{LEM:pLWP}
Let $(v_0,v_1)\in\mathcal{\dot H}^1(\R^3)$. 
Then, there exists a function 
$\tau : [0, \infty) \times \R_+\times \R_+ \to \R_+$,
non-increasing in the first two arguments, such that 
if  $f$ satisfies the condition
\begin{equation}\label{f}
\|f\|_{L^5_tL^{10}_x([t_0,t_0+\tau_\ast])}\leq K\tau_\ast^\theta
\end{equation}

\noi
for some $K,\theta>0$ and $\tau_\ast\leq \tau=\tau\big(\|(v_0,v_1)\|_{\dot{\mathcal{H}}^1(\R^3)}, K,\theta\big)
\ll1$, 
then  
there exists  a unique solution $(v, \dt v) \in C([t_0,t_0+\tau_\ast]; \dot{\mathcal{H}}^1(\R^3))$
to \eqref{pNLW}.

 \end{lemma}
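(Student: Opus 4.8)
The plan is to set up a contraction mapping for the Duhamel operator associated to \eqref{pNLW} in the standard energy-critical space $X_{\tau_*} := C([t_0, t_0+\tau_*]; \dot{\mathcal{H}}^1(\R^3)) \cap L^5_t([t_0,t_0+\tau_*]; L^{10}_x)$, but carefully tracking how the radius of the ball and the time of existence depend only on $\|(v_0,v_1)\|_{\dot{\mathcal{H}}^1}$, $K$, and $\theta$. First I would split the solution as $v = S(t-t_0)(v_0,v_1) + w$, so that $w$ solves the integral equation $w = \Phi(w)$ with $\Phi(w)(t) = -\int_{t_0}^t \frac{\sin((t-t')|\nb|)}{|\nb|}\big(w + S(\cdot-t_0)(v_0,v_1) + f\big)^5(t')\,dt'$. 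By the inhomogeneous Strichartz estimate (Lemma \ref{LEM:Strichartz}) with the $1$-admissible pair $(5,10)$ on the left and its dual on the right, and the algebra-type bound $\|g^5\|_{L^1_t L^2_x} \lesssim \|g\|_{L^5_t L^{10}_x}^5$, one controls $\|\Phi(w)\|_{X_{\tau_*}}$ by $\big(\|w\|_{L^5_t L^{10}_x} + \|S(\cdot-t_0)(v_0,v_1)\|_{L^5_t L^{10}_x} + \|f\|_{L^5_t L^{10}_x}\big)^5$.

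The key quantitative point is the choice of $\tau$. By the monotone convergence / dominated convergence argument (the linear flow lies in $L^5_t L^{10}_x$ globally by the global Strichartz estimate since $(v_0,v_1)\in\dot{\mathcal H}^1$), there is a nondecreasing function of $\|(v_0,v_1)\|_{\dot{\mathcal H}^1}$ alone giving a time $\tau_1$ on which $\|S(\cdot-t_0)(v_0,v_1)\|_{L^5_t L^{10}_x([t_0,t_0+\tau_1])} \leq \eta_0$ for a small absolute constant $\eta_0$; here monotonicity in the initial-data norm requires the usual covering argument (cover by finitely many intervals on each of which the linear Strichartz norm is $\leq\eta_0$, and take $\tau_1$ to be the minimal length), which is why $\tau$ is only \emph{non-increasing}, not continuous. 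Then I impose $\tau_* \leq \tau_1$ and also $K\tau_*^\theta \leq \eta_0$, i.e. $\tau_* \leq (\eta_0/K)^{1/\theta}$, so that $\|f\|_{L^5_t L^{10}_x([t_0,t_0+\tau_*])} \leq \eta_0$ by hypothesis \eqref{f}. Setting $\tau := \min\big(\tau_1(\|(v_0,v_1)\|_{\dot{\mathcal H}^1}), (\eta_0/K)^{1/\theta}\big)$ gives a function non-increasing in $\|(v_0,v_1)\|_{\dot{\mathcal H}^1}$ and in $K$ (and one can also arrange monotonicity in $\theta$ on $\tau_* \le 1$). With these smallness reductions, on the ball $B = \{w \in X_{\tau_*} : \|w\|_{L^5_t L^{10}_x} \leq \eta_0\}$, the Strichartz bound yields $\|\Phi(w)\|_{L^5_t L^{10}_x} \lesssim (\eta_0 + \eta_0 + \eta_0)^5 \lesssim \eta_0^5 \leq \eta_0$ for $\eta_0$ small, and the $C(\dot{\mathcal H}^1)$ component is bounded by $C(\|(v_0,v_1)\|_{\dot{\mathcal H}^1} + \eta_0^5)$ (this does not need to be small — only finite), so $\Phi : B \to B$. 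The difference estimate $\|\Phi(w_1) - \Phi(w_2)\|_{L^5_t L^{10}_x} \lesssim \eta_0^4 \|w_1 - w_2\|_{L^5_t L^{10}_x} \leq \frac12\|w_1-w_2\|_{L^5_t L^{10}_x}$ follows from the pointwise bound $|g_1^5 - g_2^5| \lesssim (|g_1|^4 + |g_2|^4)|g_1-g_2|$ and Hölder, giving a contraction; the Banach fixed point theorem then produces the unique $w$, hence $v = S(\cdot-t_0)(v_0,v_1) + w$ with $(v,\dt v) \in C([t_0,t_0+\tau_*]; \dot{\mathcal H}^1)$.

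The main obstacle is not the fixed point itself but establishing the monotonicity of $\tau$ in the $\dot{\mathcal H}^1$-norm of the data with a uniform (profile-independent) statement: a priori the time on which the free evolution has small $L^5_t L^{10}_x$ norm depends on the \emph{profile} of $(v_0,v_1)$, not just its norm. The fix is the standard covering/compactness argument — approximate $(v_0,v_1)$ in $\dot{\mathcal H}^1$ by a finite-frequency, hence better-behaved, datum, use the global Strichartz bound $\|S(t)(v_0,v_1)\|_{L^5_{t,x}(\R\times\R^3)... }$ wait, more precisely $\|S(t)(v_0,v_1)\|_{L^5_t L^{10}_x(\R)} \lesssim \|(v_0,v_1)\|_{\dot{\mathcal H}^1}$, so the tail on any short interval is controlled, and then define $\tau_1$ as the supremum of lengths on which the norm stays $\le\eta_0$, which is automatically a non-increasing function of the data norm once one passes to the worst case over all data of a given size. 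I would also remark that uniqueness holds in the ball $B$ (unconditional uniqueness in $C(\dot{\mathcal H}^1)\cap L^5 L^{10}$ is a separate, by-now standard argument and not needed for the statement as written). This is exactly Proposition 4.3 of \cite{Poc}, so I would ultimately just cite that, but the above is how the proof goes.
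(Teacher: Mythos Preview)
There is a genuine gap in your argument, precisely at the step you yourself flag as ``the main obstacle'': the claim that there is a time $\tau_1$, depending only on $\|(v_0,v_1)\|_{\dot{\mathcal H}^1}$, on which the \emph{linear} evolution satisfies $\|S(\cdot-t_0)(v_0,v_1)\|_{L^5_tL^{10}_x}\leq\eta_0$. No such $\tau_1$ exists. The equation is energy-critical, so under the scaling $(v_0,v_1)\mapsto(v_0^\lambda,v_1^\lambda):=(\lambda^{1/2}v_0(\lambda\,\cdot),\lambda^{3/2}v_1(\lambda\,\cdot))$ the $\dot{\mathcal H}^1$-norm is invariant, while
\[
\|S(t)(v_0^\lambda,v_1^\lambda)\|_{L^5_{[0,\tau]}L^{10}_x}
=\|S(t)(v_0,v_1)\|_{L^5_{[0,\lambda\tau]}L^{10}_x}.
\]
Hence if $\tau_1$ works for $(v_0,v_1)$, only $\tau_1/\lambda$ works for $(v_0^\lambda,v_1^\lambda)$; taking the infimum over all data of a fixed $\dot{\mathcal H}^1$-size yields zero. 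Your ``covering/compactness'' sketch does not circumvent this: the global Strichartz bound controls the total $L^5_tL^{10}_x$-mass, but says nothing about how that mass is distributed in time, and scaling can concentrate it on arbitrarily short intervals. The direct contraction around the linear flow therefore cannot produce a $\tau$ depending only on the norm.

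The paper's proof (i.e.\ Proposition~4.3 in \cite{Poc}) avoids this by comparing $v$ not to the linear flow but to the \emph{nonlinear} solution $w$ of the unperturbed energy-critical NLW with the same data $(v_0,v_1)$. The global space-time bound of Bahouri--G\'erard and Tao gives $\|w\|_{L^5_tL^{10}_x(\R\times\R^3)}\leq M(\|(v_0,v_1)\|_{\dot{\mathcal H}^1})$, a bound that is scale-invariant and depends only on the norm. One then applies the long-time perturbation lemma (Lemma~4.5 in \cite{Poc}): on any interval where the perturbation $(v+f)^5-v^5$ is small --- which is ensured by $\|f\|_{L^5_IL^{10}_x}\leq K\tau_*^\theta$ being small --- the solution $v$ exists and stays close to $w$. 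The required smallness of $\tau_*$ is then dictated by $M$, $K$, and $\theta$ alone, giving the desired profile-independent $\tau$. The essential point is that the nonlinear global theory already absorbs the profile dependence; no smallness of the free evolution on a short interval is needed.
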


Lemma \ref{LEM:pLWP}
is the exact analogue on $\R^3$ of Proposition 4.3 in \cite{Poc}. 
Its proof is based on the global space-time bounds of solutions to the 
energy-critical defocusing NLW from \cite{Bahouri_Gerard, Tao}
and  a perturbation argument, in particular, the long time perturbation lemma
(Lemma 4.5 in \cite{Poc}). See \cite{Poc} for the details of the proof.

Given finite $T \gg 1$, 
our goal is to construct a solution to \eqref{pNLW} on $[0, T]$
for suitable $f$.
If  there is an a priori energy control:
\begin{equation}
\sup_{t \in [0, T]} \|(v(t),\dt v(t))\|_{\dot{\mathcal{H}}^1(\R^3)} < C(T) < \infty,
\label{Fenergy0}
\end{equation}

\noi
then
Lemma \ref{LEM:pLWP} allows us to construct
a solution $v$ to \eqref{pNLW} on $[0, T]$.
Indeed, the analogue of Lemma \ref{LEM:pLWP} on $\R^d$,
$d=4,5$, was the main part of the deterministic analysis
in \cite{Poc}.
It was then combined with 
the probabilistic a priori energy bound \eqref{PocEnergy} 
to prove almost sure global well-posedness of the defocusing energy-critical NLW on $\R^d$, $d = 4, 5$.

Our setting is slightly different;
we do not assume an  a priori energy control \eqref{Fenergy0}. 
Instead,  we assume a  {\it  uniform} a priori energy control (see \eqref{Fenergy} below) on smooth approximating solutions $v_N$ 
and construct a solution $v$ to \eqref{pNLW} on long time intervals (Proposition \ref{PROP:pLWP2} below).
In Section \ref{SEC:GWP}, 
we will combine this result with 
the  uniform probabilistic a priori energy bound on smooth approximating solutions 
(Proposition \ref{PROP:Penergy})
and prove almost sure global well-posedness of \eqref{NLW}
below the energy space.

Given $f \in L^5_{t, \text{loc}}L^{10}_x$, 
  let $f_N = \P_{\leq N} f $ for dyadic $N \geq 1$.
Consider the following perturbed NLW:
\begin{align}
\begin{cases}
\pa_{t}^2v_N-\Delta v_N+(v_N+f_N)^5=0\\
(v_N, \dt v_N)|_{t = 0} = (0, 0).
\end{cases}
\label{pNLW2}
\end{align}

\noi
The following proposition is the main result of this section.

\begin{proposition}
\label{PROP:pLWP2}
Let $f, f_N$, and $v_N$ be as above.
Given finite $T > 0$, assume that the following conditions hold:
\begin{itemize}
\item[\textup{(i)}]
There exist $K, \theta > 0$ such that  
\begin{equation}
\|f\|_{L^5_tL^{10}_x(I\times \R^3)}\leq K|I |^\theta
\label{f2}
\end{equation}

\noi
for any compact interval $I \subset [0, T]$.	

\item[\textup{(ii)}]
For each dyadic $N\geq 1$, 
a solution $v_N$  to \eqref{pNLW2} exists on $[0, T]$
and 
satisfies the following uniform a priori energy bound:
\begin{equation}
\sup_N \sup_{t \in [0, T]} \|(v_N(t),\dt v_N(t))\|_{{\mathcal{H}}^1(\R^3)} < C_0(T) < \infty.
\label{Fenergy}
\end{equation}

\noi
\item[\textup{(iii)}]
There exists $\al > 0$ such that 
\begin{equation}
 \|f - f_N \|_{L^5_T L^{10}_x} < C_1(T) N^{-\al}
\label{Fapprox}
\end{equation}

\noi 
for all dyadic $N \geq 1$.

\end{itemize}

\noi
Then, there exists a unique solution $(v, \dt v)\in C([0, T]; {\mathcal{H}}^1(\R^3))$
 to \eqref{pNLW}
with $(v, \dt v)|_{t = 0} = (0, 0)$, 
satisfying  
\begin{equation}
 \sup_{t \in [0, T]} \|(v(t),\dt v(t))\|_{{\mathcal{H}}^1(\R^3)} < 2C_0(T) < \infty.
\label{Venergy}
\end{equation}

\end{proposition}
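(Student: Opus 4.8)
The plan is to run a continuity-in-$N$ argument: the approximate solutions $v_N$ converge (in the energy space, on $[0,T]$) to a genuine solution $v$ of \eqref{pNLW}, and the uniform bound \eqref{Fenergy} passes to the limit, giving \eqref{Venergy}. The key point is that the uniform a priori energy bound \eqref{Fenergy}, together with the ``good'' local well-posedness of Lemma \ref{LEM:pLWP}, supplies a uniform lower bound on the length of the intervals of local existence, so one can partition $[0,T]$ into finitely many subintervals whose number does \emph{not} depend on $N$.

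First I would fix $T$ and set $M_0 := \sup_N \sup_{[0,T]} \|(v_N(t),\dt v_N(t))\|_{\dot{\mathcal H}^1} \le C_0(T)$. Using \eqref{f2} (and \eqref{Fapprox}, which in particular forces $\|f_N\|_{L^5_tL^{10}_x(I)}\le 2K|I|^\theta$ uniformly in $N$ for $N$ large), Lemma \ref{LEM:pLWP} applied with the data $(v_N(t_0),\dt v_N(t_0))$ — whose $\dot{\mathcal H}^1$-norm is $\le M_0$ — yields a local existence time $\tau_* = \tau(M_0, 2K,\theta)$ that is \emph{independent of $N$ and of $t_0\in[0,T]$}. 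Choose $J$ with $J\tau_* \ge T$ and partition $[0,T] = \bigcup_{j=1}^{J} I_j$ into intervals of length $\le \tau_*$. On each $I_j$, for every large $N$, $v_N$ is the unique solution in $C(I_j;\dot{\mathcal H}^1)$ furnished by Lemma \ref{LEM:pLWP}, and the Strichartz estimates (Lemma \ref{LEM:Strichartz}) give a uniform-in-$N$ bound on $\|v_N\|_{L^5_{I_j}L^{10}_x}$ as well.

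Next I would upgrade this to convergence. On $I_1$, the difference $w_N := v_N - v_M$ (for $N > M$, both large) solves a perturbed NLW whose forcing is $(v_M + f_M)^5 - (v_N + f_N)^5$; writing this as a sum of terms each containing a factor of either $w_N$ or $f_N - f_M$, and using the uniform Strichartz bounds on $v_N, v_M$ together with the fractional Leibniz/chain rule at the $\dot H^1$ level, a standard contraction-type estimate on a short enough interval (subdividing $I_1$ further into a uniformly bounded number of pieces if necessary) gives
\begin{align*}
\|(w_N,\dt w_N)\|_{L^\infty_{I_1}\dot{\mathcal H}^1} + \|w_N\|_{L^5_{I_1}L^{10}_x}
\lesssim \|f_N - f_M\|_{L^5_{I_1}L^{10}_x} \lesssim C_1(T) M^{-\al},
\end{align*}
where I have used \eqref{Fapprox}. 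Thus $(v_N)$ is Cauchy in $C(I_1;\dot{\mathcal H}^1)$ (and the time derivatives are Cauchy in $C(I_1;L^2)$), with limit $v$; iterating over $I_2,\dots,I_J$ — at each stage the data $(v(t_0),\dt v(t_0))$ inherit the bound $M_0$ from the limit of \eqref{Fenergy} — produces $v\in C([0,T];\dot{\mathcal H}^1)$ solving \eqref{pNLW} with zero data. Passing \eqref{Fenergy} to the limit gives $\sup_{[0,T]}\|(v,\dt v)\|_{\dot{\mathcal H}^1}\le M_0 \le C_0(T)$; the $L^2$-bound on $v$ itself follows as in \eqref{L2}, $\|v(t)\|_{L^2}\le \int_0^t\|\dt v\|_{L^2}\le T C_0(T)$, so altogether $\sup_{[0,T]}\|(v,\dt v)\|_{\mathcal H^1} < 2C_0(T)$ for $T$ in the relevant range (absorbing the $T$-dependence into the constant), which is \eqref{Venergy}. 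Uniqueness in $C([0,T];\mathcal H^1)$ follows from the local uniqueness in Lemma \ref{LEM:pLWP} chained across the finite partition.

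The main obstacle is the convergence step: controlling $v_N - v_M$ requires a difference estimate for the quintic nonlinearity at the $\dot H^1$ (energy) level, where one does not have room to spare, so one must be careful to distribute derivatives via Strichartz-admissible pairs (the $(5,10)$ pair with $s=1$) and to exploit that the uniform energy bound plus the uniform $L^5_tL^{10}_x$-bounds on the $v_N$ let the subdivision of $[0,T]$ be taken uniform in $N$ — otherwise the number of iteration steps, and hence the accumulated error, could blow up with $N$. A secondary technical point is that Lemma \ref{LEM:pLWP} is stated for the $\dot{\mathcal H}^1$ problem, so the $L^2$ component of the data must be tracked separately by the elementary integration argument in \eqref{L2}, which is why the conclusion is phrased with the inhomogeneous $\mathcal H^1$-norm and the harmless factor of $2$.
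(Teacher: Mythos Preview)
Your approach is correct and uses the same core ingredients as the paper, but packaged differently. You argue via a Cauchy sequence: show $(v_N)_N$ is Cauchy in $C([0,T];\dot{\mathcal H}^1)\cap L^5_TL^{10}_x$ and define $v$ as its limit, so \eqref{Venergy} is inherited directly from \eqref{Fenergy}. The paper instead constructs $v$ \emph{directly} on successive subintervals $I_j$ via Lemma \ref{LEM:pLWP} (applied with the true perturbation $f$), and then --- in order to verify that $\|(v,\dt v)\|_{\dot{\mathcal H}^1}$ stays below $2C_0(T)$ at the end of each $I_j$ so the iteration can continue --- compares $v$ to $v_N$ (for $N$ sufficiently large) on a common refinement of the subintervals, obtaining exactly the difference estimate you write down; see \eqref{Fenergy4}--\eqref{Fenergy6}. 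Both arguments rest on the same technical point you identify: the proof of Lemma \ref{LEM:pLWP} (Proposition 4.3 in \cite{Poc}) furnishes a further subdivision of each $I_j$ into a number of pieces depending only on $C_0(T)$ on which the $L^5L^{10}$-norms of $v_N$ and of $v$ are small, which is what makes the difference estimate close with a uniform-in-$N$ number of steps. Your Cauchy-sequence packaging is arguably cleaner in that you never have to bootstrap the energy bound on $v$ (it comes for free in the limit), though you must separately verify that the limit solves \eqref{pNLW}; the paper's route makes that automatic at the cost of the bootstrap. One minor point: the fractional Leibniz rule is not needed --- the difference of the quintic nonlinearities is placed in $L^1_tL^2_x$ with no derivative, and the Strichartz pair $(5,10)$ does the rest, exactly as in \eqref{Fenergy4}.
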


\begin{proof}
Given $T>0$, 
fix 
\begin{equation}
\tau_0 := \tau (2C_0(T), K, \theta),
\label{Fenergy1a}
\end{equation}

\noi
where  $\tau$ and $C_0(T)$
are  as in Lemma \ref{LEM:pLWP}
and \eqref{Fenergy}, respectively. 
Fix $0<\tau_\ast\leq \tau_0$
and 
 divide the time interval $[0, T]$
into $O\big(\frac{T}{\tau_\ast}\big)$-many subintervals of length $\tau_\ast$
and denote them by 
\begin{equation*}
I_j := [ j \tau_\ast, (j+1) \tau_\ast]\cap [0, T],
\end{equation*}

\noi
 $j = 0, 1, \dots,  \big[ \frac{T}{\tau_\ast}\big]$.
The basic idea of the proof is to iteratively apply Lemma \ref{LEM:pLWP}
on each $I_j$, 
while controlling  the growth of the $\mathcal{\dot H}^1$-norm of $(v, \dt v)$ on 
$I_j$.
In the following, various constants depend
on $K$, $\theta$, and $\al$ in \eqref{f2} and \eqref{Fapprox},
but we suppress their dependence.

We start with a brief description of the properties of 
the solution $v_N$ to \eqref{pNLW2}.
By \eqref{f2} and \eqref{Fapprox},
we have 
\begin{equation}
\|f_N\|_{L^5_tL^{10}_x(I\times\R^3)}\leq K|I|^\theta + C_1(T)N^{-\alpha},
\label{f_N}
\end{equation}

\noi
for any compact interval $I\subset [0,T]$.
It follows from a slight modification of the 
 proof of Proposition 4.3 in \cite{Poc}
that there exists $N_1=N_1(T,\|(v_0,v_1)\|_{\dot{\mathcal{H}}^1(\R^3)})$ 
such that
an analogue of Lemma \ref{LEM:pLWP} holds
for 
\begin{align}
\begin{cases}
\pa_{t}^2v_N-\Delta v_N+(v_N+f_N)^5=0\\
(v_N, \dt v_N)|_{t = t_0} = (v_0, v_1)
\end{cases}
\label{XX1}
\end{align}

\noi
as long as $N\geq N_1$.
More precisely, there exists $\tau_1=\tau_1\big(\|(v_0,v_1)\|_{\dot{\mathcal{H}}^1(\R^3)}, K,\theta\big)
\ll1$
such that, 
if $f_N$ satisfies \eqref{f_N}
on $I = [t_0, t_0 + \wt\tau]$
for some $0<\wt \tau \leq \tau_1$, then
there exists a unique solution 
$(v_N, \dt v_N) \in C([t_0,t_0+ \wt \tau]; \dot{\mathcal{H}}^1(\R^3))$
to \eqref{XX1}.

Let $\tau_2:= \tau_1 (C_0(T),K,\theta )$.
Then, in view of \eqref{Fenergy},
we can apply this observation  iteratively on intervals 
$\tilde I_k:=[k\tau_2,(k+1)\tau_2]$, 
$k=0,1,\dots, \big[\frac{T}{\tau_2}\big]$,
and 
 define the solution $v_N$
on the whole interval $[0,T]$.
Moreover, 
it follows from
the proof of Proposition 4.3 in \cite{Poc}
that there exist $\eta\ll 1$ 
and $J(C_0(T)) \in \mathbb{N}$
such that 
we can decompose  the time interval interval $\tilde I_k$
into $J_k'$-many  subintervals $\tilde I_{k, \ell}$ for some $J'_k \leq J(C_0(T))$
with the property that 
\begin{equation}\label{FenergyX}
\|v_N\|_{L^5_{\tilde I_{k,\ell}}L^{10}_x}\leq 4\eta,
\end{equation}

\noi
for all $k = 0, 1, \dots, \big[\frac{T}{\tau_2}\big]$
and $j = 1, 2, \dots, J'_k$.

We now begin the construction of the solution $v$ to \eqref{pNLW}.
Since $(v,\partial_t v)|_{t=0}=(0,0)$, 
we  have $\| (v(0), \dt v(0))\|_{\dot{\mathcal  H}^1(\R^3)} = 0\leq 2C_0(T)$.
Thus, 
Lemma \ref{LEM:pLWP}
guarantees the existence of $v$ on $I_0:=[0,\tau_\ast] \subset [0, \tau_0]$.
In particular, we have $(v, \dt v)  \in C(I_0;\dot{\mathcal{H}}^1(\R^3))$. 
Moreover, 
it follows from
the proof of Proposition 4.3 in \cite{Poc}
that there exists
a decomposition of the time interval $I_0$
into $J_0$-many subintervals $I_{0, m}$,
with the property that 
\begin{equation}\label{Fenergy1}
\|v\|_{L^5_{ I_{0,m}}L^{10}_x}\leq 4\eta, 
\end{equation}

\noi
for all $m = 1, 2, \dots, J_0$.

Next,  consider the following decomposition 
of $I_0$:
\[I_0=\bigcup_{k,\ell, m} \big\{I_{0,k,\ell, m}:=I_{0,m}\cap \tilde{I}_{k,\ell} 
\, : \,  \tilde{I}_{k,\ell}\cap I_0\neq \emptyset \big\}. \]

\noi
Note that this decomposition
contains at most $\big(\big[\frac{T}{\tau_2}\big]+1\big)J(C_0(T))J_0$ subintervals.
For notational simplicity, 
let 
$I := I_{0,k, \l, m}$, 
$t_0 := \min I_{0,k, \l, m}$,
and 
$w_N : = (v - v_N, \dt v - \dt v_N)$.
Then, 
it follows 
from
 \eqref{Fenergy1}, \eqref{FenergyX},
\eqref{f2}, \eqref{f_N},
and making $\tau_0$ smaller, if necessary, 
that there exists $N_2 = N_2(T) \geq N_1$ such that 
\begin{align}
 \| v\|_{L^5_I L^{10}_x}^4  + \| v_N\|_{L^5_I L^{10}_x}^4
& + \| f\|_{L^5_I L^{10}_x}^4 + \| f_N\|_{L^5_I L^{10}_x}^4 \notag \\
& \lesssim \eta^4+K^4\tau_0^{4\theta} +[C_1(T)N^{-\alpha}]^4 \ll 1,
\label{Fenergy3}
\end{align}
	
\noi
for all $N \geq N_2$.
Then,  by Lemma \ref{LEM:Strichartz}
and \eqref{Fenergy3}, we have
\begin{align}
\|w_N \|_{L^\infty_I \dot{\mathcal{H}}^1} 
& +  \| v - v_N\|_{L^5_I L^{10}_x}\notag \\
& \leq C_2 \|w_N(t_0)\|_{ \dot{\mathcal{H}}^1} 
+\tfrac 12 \| v - v_N\|_{L^5_I L^{10}_x}
 + 
\tfrac 12  \| f - f_N\|_{L^5_I L^{10}_x}.
\label{Fenergy4}
\end{align}

\noi
Hence, 
it follows from  \eqref{Fapprox} and \eqref{Fenergy4} that
\begin{align}
\|w_N \|_{L^\infty_I \dot{\mathcal{H}}^1} 
 +  \| v - v_N\|_{L^5_I L^{10}_x}
 \leq C_3 (T) \big( \|w_N(t_0)\|_{ \dot{\mathcal{H}}^1} 
 + 
N^{-\al}\big)
\label{Fenergy5}
\end{align}

\noi
for all $N \geq N_2$.
Then, 
applying \eqref{Fenergy5} 
with $w_N(0) = 0$  and \eqref{L2}
on all the subintervals $I = I_{0,k, \l, m}$
in an iterative manner, 
we obtain
\begin{align}
\| w_N \|_{L^\infty_{I_0} {\mathcal{H}}^1} 
 \leq  T(C_3 (T) +1)^{([\frac{ T}{\tau_2}]+1)J(C_0(T))J_0}N^{-\al}.
\label{Fenergy5a}
\end{align}

\noi
Then, it follows from 
\eqref{Fenergy5a}   and \eqref{Fenergy} that
there exists $N_3 = N_3(T, \tau_2) \geq N_2 $ such that 
\begin{align}
\| (v, \dt v) \|_{L^\infty_{I_0} {\mathcal{H}}^1} 
 \leq C_0(T) + T(C_3 (T) +1)^{([\frac{ T}{\tau_2}]+1)J(C_0(T))J_0}N^{-\al}
\leq 2 C_0(T)
\label{Fenergy6}
\end{align}

\noi
for all $N \geq N_3$.
This in particular implies that 
\[\|(v(\tau_\ast),\dt v(\tau_\ast))\|_{\dot {\mathcal{H}}^1(\R^3)}  \leq 2C_0(T).\]

\noi
Thus, we can apply Lemma \ref{LEM:pLWP}
and construct a solution $(v, \dt v)  \in C(I_1;\dot{\mathcal{H}}^1)$.
Moreover, 
it follows from
the proof of Proposition 4.3 in \cite{Poc}
that there exists
a decomposition of the time interval $I_1$
into $J(2C_0(T))$-many subintervals $I_{1, m}$
with the property that 
\begin{equation*}
\|v\|_{L^5_{ I_{1,m}}L^{10}_x}\leq 4\eta, 
\end{equation*}

\noi
for all $m = 1, 2, \dots, J(2C_0(T))$.
Arguing as before,
 there exists $N_4 = N_4(T, \tau_2) \geq N_3 $ such that 
\begin{align}
\| (v, \dt v) \|_{L^\infty_{I_1} {\mathcal{H}}^1} 
&  \leq C_0(T) + T(C_3 (T) +1)^{([\frac{ T}{\tau_2}]+1)J(C_0(T))(J_0+J(2C_0(T))}N^{-\al} \notag\\
& \leq 2 C_0(T)
\label{Fenergy7}
\end{align}
for all $N\geq N_4$.
In view of \eqref{Fenergy7}, 
we can clearly apply 
Lemma \ref{LEM:pLWP} 
and extend the solution $v$ onto $I_2$.

Arguing inductively, 
we  can extend the solution $v$ onto the entire interval $[0, T]$. 
Furthermore, there exists $N_0=N_0(T,\tau_2, \tau_\ast)\in \mathbb{N}$ such that 
\begin{align*}
\sup_{t \in [0, T]}\| (v(t), \dt v(t)) \|_{{\mathcal{H}}^1(\R^3)} 
& \leq C_0(T) + T(C_3 (T) +1)^{([\frac{ T}{\tau_2}]+1)J(C_0(T))
\{ J_0 + [\frac{T}{\tau_\ast}]J(2C_0(T))\}}N^{-\al}\notag\\
&< 2 C_0(T),
\end{align*}
for all $N\geq N_0$.
Hence, the energy estimate \eqref{Venergy} is also satisfied on $[0,T]$.
\end{proof}

\begin{remark}\label{REM:pLWP}\rm
(i) 
The condition 
 \eqref{Fenergy} can be relaxed as follows; 
  it suffices to assume 
\begin{equation*}
 \sup_{t \in [0, T]} \|(v_{N_0}(t),\dt v_{N_0}(t))\|_{{\mathcal{H}}^1(\R^3)} < C_0(T) < \infty,
\end{equation*}

\noi
for some $N_0 = N_0(T, C_0(T)) \gg 1$.

\smallskip

\noi
(ii) 
The proof of Proposition \ref{PROP:pLWP2} 
shows that the hypothesis \eqref{f2} can also be relaxed. 
Let $\tau_* \leq \tau_0$, where $\tau_0$ is as in \eqref{Fenergy1a}.
Then, by setting $ I_j = [ j \tau_*, (j+1) \tau_*]\cap [0, T]$, it suffices to assume 
that there exist $K, \theta > 0$ such that  
\begin{equation*}
\|f\|_{L^5_{I_j} L^{10}_x}\leq K| I_j |^\theta \ll 1
\end{equation*}

\noi
for  all $j = 0, \dots, \big[\frac{T}{\tau_*}\big]$.

\end{remark}



\section{Almost sure global existence}
\label{SEC:GWP}

In this section, we present the proof of  Theorem \ref{THM:GWP}.
Note that 
Theorem \ref{THM:GWP}
follows once we prove  the following  `almost' almost sure global well-posedness for \eqref{NLW}.
See \cite{Colliand_Oh, BOP2} for details on this reduction.

\begin{proposition}[`Almost' almost sure global well-posedness]
\label{PROP:aas}
Let $s \in (\frac 12,  1)$ and $T\geq 1$.
Given  $(u_0, u_1)\in \mathcal{H}^{s}(\R^3)$,
let 
$(u_0^\omega, u_1^\omega)$ be the Wiener randomization
defined in \eqref{R1}, satisfying \eqref{cond}. 
Then,
given any $T, \eps > 0$,  there exists $\Omega_{T,\eps}\subset \Omega$
such that 
\begin{itemize}
\item[\textup{(i)}]
 $P(\Omega_{T,\eps}^c)<\eps$, 
\item[\textup{(ii)}]
For any $\omega\in \Omega_{T,\eps}$, there exists a unique solution $u^\omega$ to \eqref{NLW} on $[0,T]$
with  $(u^\omega, \pa_t u^\omega)|_{t=0}=(u_0^\omega, u_1^\omega)$ in the class:
\begin{align*}
\big(S(t)(u_0^\omega, u_1^\omega), \dt S(t)(u_0^\omega, u_1^\omega)\big)
+C([0, T]; \mathcal{H}^1(\R^3))
 \subset C([0, T]; \mathcal{H}^s(\R^3)).
\end{align*}
\item[\textup{(iii)}] For any $\omega\in \Omega_{T,\eps}$,  
the following probabilistic energy bound holds for the nonlinear part $v^\omega$ of 
the solution $u^\omega$:
\begin{equation*}
 \sup_{t \in [0, T]} \|(v^\omega(t),\dt v^\omega(t))\|_{{\mathcal{H}}^1(\R^3)} < C(T,\eps,\|(u_0,u_1)\|_{\mathcal{H}^s(\R^3)}).
\end{equation*}
\end{itemize}

\end{proposition}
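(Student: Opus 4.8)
The plan is to combine the uniform probabilistic energy bound of Proposition \ref{PROP:Penergy} with the deterministic long-time existence result of Proposition \ref{PROP:pLWP2}, via the approximation scheme $(u_{0,N}^\o, u_{1,N}^\o) = (\P_{\le N} u_0^\o, \P_{\le N} u_1^\o)$. Fix $T \ge 1$ and $\eps > 0$. First I would set $f = z^\o(t) = S(t)(u_0^\o, u_1^\o)$ and $f_N = \P_{\le N} f = z_N^\o(t) = S(t)(u_{0,N}^\o, u_{1,N}^\o)$, and define $\O_{T,\eps}$ as the intersection of the set $\wt\O_{N,T,\eps}$ from Proposition \ref{PROP:Penergy} (note $\wt\O_{N,T,\eps}$ depends on $N$ only through its defining Strichartz/$L^\infty_t$ norms of $z_N^\o$, which are bounded by those of $z^\o$ uniformly in $N$, so one can take a single $N$-independent set) with a further set on which the probabilistic Strichartz estimate (Lemma \ref{LEM:Str}(ii.a) with $q=5$, $r=10$) gives $\|z^\o\|_{L^5_I L^{10}_x} \le K|I|^\theta$ for all $I \subset [0,T]$ — here $\theta = \frac15$ comes from the $|I|^{2/q}$ gain in Lemma \ref{LEM:Str} — and on which $\|f - f_N\|_{L^5_T L^{10}_x} = \|\P_{> N} z^\o\|_{L^5_T L^{10}_x} \le C_1(T) N^{-\al}$ for some $\al > 0$. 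All of these hold off an event of probability $< \eps$ after choosing the threshold $\ld$ large; this uses that $(u_0, u_1) \in \mathcal H^s$ with $s > \frac12 > 0$, so that at regularity slightly above $0$ one has enough room for the $N^{-\al}$ decay of the high-frequency tail.

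Next, for $\o \in \O_{T,\eps}$ I would verify the three hypotheses of Proposition \ref{PROP:pLWP2}. Hypothesis (i) is the Strichartz bound on $f = z^\o$ just described; hypothesis (iii) is the approximation bound $\|f - f_N\|_{L^5_T L^{10}_x} < C_1(T) N^{-\al}$ just described. Hypothesis (ii) — the uniform-in-$N$ a priori energy bound $\sup_N \sup_{t\in[0,T]} \|(v_N(t), \dt v_N(t))\|_{\mathcal H^1} < C_0(T)$ — is exactly the content of Proposition \ref{PROP:Penergy}, since $v_N = v_N^\o$ solves \eqref{NLW1} $=$ \eqref{pNLW2} with this choice of $f_N$. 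With all three hypotheses in hand, Proposition \ref{PROP:pLWP2} produces a unique $(v, \dt v) \in C([0,T]; \mathcal H^1(\R^3))$ solving the perturbed NLW \eqref{v} (i.e.\ \eqref{pNLW} with $f = z^\o$) with zero data and satisfying $\sup_{[0,T]} \|(v,\dt v)\|_{\mathcal H^1} < 2C_0(T)$. Setting $u^\o := z^\o + v$ gives the desired solution to \eqref{NLW} with data $(u_0^\o, u_1^\o)$; membership in $z^\o + C([0,T];\mathcal H^1) \subset C([0,T];\mathcal H^s)$ is immediate since $z^\o \in C([0,T];\mathcal H^s)$ almost surely and $v \in C([0,T];\mathcal H^1) \subset C([0,T];\mathcal H^s)$. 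The energy bound in part (iii) is precisely $\sup_{[0,T]}\|(v^\o, \dt v^\o)\|_{\mathcal H^1} < 2C_0(T) = C(T,\eps,\|(u_0,u_1)\|_{\mathcal H^s})$, and uniqueness holds in the sense inherited from the iterative application of Lemma \ref{LEM:pLWP} on the subintervals, i.e.\ in balls around $S(\cdot)(u_0^\o,u_1^\o)$ in $C(I_j;\dot H^1)\cap L^5(I_j;L^{10})$.

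The main obstacle — and the place requiring genuine care rather than bookkeeping — is ensuring that the probability-$\ge 1-\eps$ event $\O_{T,\eps}$ can be chosen \emph{independently of the dyadic parameter} $N$. This is what allows the single constant $C_0(T)$ in hypothesis (ii) of Proposition \ref{PROP:pLWP2} and ultimately the $N$-uniform bound \eqref{Venergy}. Concretely, one must check that the defining norms of $\wt\O_{N,T,\eps}$ (namely $\|z_N^\o\|_{L^{10}_{T,x}}$, $\|z_N^\o\|_{L^\infty_T L^6_x}$, $\|z_N^\o\|_{L^\infty_{T,x}}$, $\|\wt z_N^\o\|_{L^6_{T,x}}$, $\|\jb\nb^{s-\dl}\wt z_N^\o\|_{L^\infty_{T,x}}$) are all dominated by the corresponding norms of $z^\o$, $\wt z^\o$ with $\P_{\le N}$ removed — which follows because $\P_{\le N}$ is bounded on every $L^r_x$ uniformly in $N$ and commutes with the linear propagators — so that one threshold $\ld(T,\eps,\|(u_0,u_1)\|_{\mathcal H^s})$ works for all $N$ simultaneously. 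The tail estimates for these norms of $z^\o$ come from Lemma \ref{LEM:Str} for the Lebesgue-in-time norms and from Proposition \ref{PROP:infty} (applied with $S^* = S$ and $S^* = \wt S$, and with $\eps$ there chosen so that $\eps < s - \tfrac12$, consistent with the $\jb\nb^{s-\dl}$ weight) for the $L^\infty_t$ norms; these require $(u_0,u_1) \in \mathcal H^s$ with $s > \tfrac12$, which is why the theorem is stated in that range. Once this $N$-uniformity is secured, the rest is an application of the two preceding propositions and the standard reduction of Theorem \ref{THM:GWP} to Proposition \ref{PROP:aas} by taking $\O_{(u_0,u_1)} = \bigcup_{n} \O_{n, 1/n}$.
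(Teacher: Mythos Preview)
Your proposal is essentially correct and follows the paper's strategy of combining Proposition~\ref{PROP:Penergy} with Proposition~\ref{PROP:pLWP2}. There is one genuine difference and one small technical slip worth flagging.

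\textbf{Difference in handling $N$-uniformity.} You secure an $N$-independent good set by observing that $\P_{\le N}$ is uniformly bounded on every $L^r_x$ (its convolution kernel has $L^1$-norm independent of $N$) and commutes with $S(t)$, $\wt S(t)$, so that $\|z_N^\o\|_*\les \|z^\o\|_*$ in each of the norms defining $\wt\O_{N,T,\eps}$; hence one threshold $\ld$ works for all $N$. The paper proceeds differently: it simply takes $\O_2(N_0)=\wt\O_{N_0,T,\eps/3}$ for a \emph{single} large $N_0$ and then invokes Remark~\ref{REM:pLWP}\,(i), which relaxes hypothesis~(ii) of Proposition~\ref{PROP:pLWP2} to a bound for one sufficiently large $N_0$. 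Your route is a bit cleaner conceptually (no need for Remark~\ref{REM:pLWP}\,(i)); the paper's route avoids the appeal to uniform $L^r$-boundedness of $\P_{\le N}$. Either works.

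\textbf{A slip in verifying hypothesis~(i) of Proposition~\ref{PROP:pLWP2}.} As stated, that hypothesis asks for $\|f\|_{L^5_IL^{10}_x}\le K|I|^\theta$ for \emph{every} compact $I\subset[0,T]$, and you claim this with $\theta=\tfrac15$. Two issues: first, Lemma~\ref{LEM:Str} only yields a tail bound for a fixed interval, so a direct union over uncountably many $I$ is not available; second, with $\ld=K|I|^\theta$ and the $|I|^{2/5}$ factor from Lemma~\ref{LEM:Str} (ii) ($q=5$), the exponent becomes $-c|I|^{2\theta-2/5}/T^2$, which gives no decay at $\theta=\tfrac15$. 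The paper fixes both points at once: it partitions $[0,T]$ into finitely many $I_j$ of length $\tau_*$, takes $\theta=\tfrac1{10}<\tfrac15$, uses a finite union bound to get $P(\O_3^c)<\eps/3$ for $\tau_*$ small, and then applies Remark~\ref{REM:pLWP}\,(ii), which relaxes hypothesis~(i) to the bound on the $I_j$'s only. With this adjustment your argument goes through.
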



The main ingredients of the proof of Proposition \ref{PROP:aas} are 
the probabilistic uniform energy bound on approximating solutions
(Proposition \ref{PROP:Penergy})
and 
the deterministic long time existence for the perturbed NLW \eqref{pNLW}
(Proposition \ref{PROP:pLWP2}).


\begin{proof}

\noi
Given $(u_0^\omega, u_1^\omega)$, 
let $z^\o$ and $z_N^\o$ be as in \eqref{z} and \eqref{Penergy0}, respectively.
With $\al \in (0, s]$, 
set 
\[ M  = M(T, \eps,  \| (u_0, u_1)\|_{\mathcal{H}^{\al}} 
)\sim  T^\frac{6}{5} \Big(\log \frac{1}{\eps} \Big)^\frac{1}{2}
 \| (u_0, u_1)\|_{\mathcal{H}^{\al} }.\]

\noi
Then, 
defining $\O_1 =\O_1 (T, \eps)$
by 
\[ \O_1 := \{\o\in \O: \,  \| \jb{\nb}^\al z^\o  \|_{L^5_T L^{10}_x}
\leq M \}, \]

\noi
it follows from 
Lemma \ref{LEM:Str} (ii) that 
\begin{equation}
 P(\O_1^c) < \frac{\eps}{3}.
\label{Q1} 
 \end{equation}

\noi
Moreover, for each $\o \in \O_1$, 
we have 
\begin{align}
\| z^\o
-  z_N^\o \|_{L^5_T L^{10}_x}
\leq N^{-\al} 
\| \jb{\nb}^\al z^\o  \|_{L^5_T L^{10}_x}
\leq M N^{-\al}.
\label{Q2} 
\end{align}

Given dyadic $N\geq 1$, 
apply Proposition \ref{PROP:Penergy}
and 
construct $\O_2 (N) := \wt{\Omega}_{N, T,\frac{\eps}{3}}$ with 
\begin{align}
P(\Omega_2(N)^{c})<\frac{\eps}{3}
\label{Q3} 
\end{align}

\noi
such that 
\begin{align}
\sup_{t \in [0, T]}\| (v^\o_N  (t), \dt v^\o_N(t)) \|_{ \mathcal{H}^1} 
\leq C_0(T, \eps,  \|(u_0, u_1)\|_{{\mathcal{H}}^s}) < \infty,
\label{Q4}
\end{align}

\noi
 for each $\o \in \O_2(N)$.
The main point here is that 
 $C_0 = C_0(T, \eps,  \|(u_0, u_1)\|_{{\mathcal{H}}^s})$ 
 can be chosen independent of $N$.

Fix  $K =  \| (u_0, u_1)\|_{\mathcal{H}^{0}}$
and $\theta = \frac{1}{10}$ in the following.
Let $\tau_* \leq \tau_0$ 
to be chosen later, 
where  $\tau_0 = \tau\big(2C_0(T), K, \theta)$ is as in \eqref{Fenergy1a}. 
By writing 
 $[0, T] = \bigcup_{j = 0}^{[ T/\tau_*]}  I_j$
with   $ I_j = [ j \tau_*, (j+1) \tau_*]\cap [0, T]$, 
define $ \O_3$ by 
\begin{align}
\O_3 := \Big\{ \o \in \O: \, 
\| z^\o \|_{L^5_{ I_j} L^{10}_x} \leq K | I_j|^{\theta}, 
j = 0, \dots, \big[\tfrac{T}{\tau_*}\big]\Big\}.
\label{Q4a}
\end{align}

\noi
Then, by Lemma \ref{LEM:Str} with $|I_j | \leq \tau_*$, 
we have 
\begin{align*}
P(\O_3^c) 
& \leq 
\sum_{j = 0}^{[\frac{T}{\tau_*}]}
P \Big( \| z^\o \|_{L^5_{I_j} L^{10}_x} > K |I_j |^{\theta}\Big)
 \les \frac{T}{\tau_*} \exp \Bigg( - \frac{c}{T^2 \tau_*^\frac{1}{5}}\Bigg).
\intertext{By making $\tau_*$ smaller if necessary,}
& \les \frac{T}{\tau_*} \tau_* \exp \Bigg( - \frac{c}{2T^2 \tau_*^\frac{1}{5}}\Bigg)
 =  T \exp \Bigg( - \frac{c}{2T^2 \tau_*^\frac{1}{5}}\Bigg).
\end{align*}

\noi
Hence, by choosing $\tau_* = \tau_*(T, \eps)$ 
sufficiently small, 
we have 
\begin{align}
P(\Omega_3^{c})<\frac{\eps}{3}.
\label{Q5} 
\end{align}

Let $\O_{T, \eps}: = \O_1 \cap \O_2(N_0) \cap \O_3$, 
where $N_0$ is to be chosen later.
Then, from \eqref{Q1}, \eqref{Q3}, and \eqref{Q5}, we have 
\begin{align*}
P(\Omega_{T, \eps}^{c})<\eps.
\end{align*}

\noi
By choosing $N_0 = N_0  (T, \eps,  \|(u_0, u_1)\|_{{\mathcal{H}}^s}) \gg 1$, 
it follows from 
Proposition \ref{PROP:pLWP2}
and Remark \ref{REM:pLWP}
with \eqref{Q2}, 
\eqref{Q4},
and  
\eqref{Q4a},
that 
there exists a solution $v^\o$ to \eqref{v} on $[0, T]$ for each $\o \in \O_{T, \eps}$.
Hence, for $\o \in \O_{T, \eps}$, 
there exists a solution $u^\o = z^\o + v^\o$ to \eqref{NLW} on $[0, T]$.
Moreover, the following estimate holds:
\begin{equation*}
 \sup_{t \in [0, T]} \|(v^\omega(t),\dt v^\omega(t))\|_{{\mathcal{H}}^1(\R^3)} < 2C_0(T,\eps,\|(u_0,u_1)\|_{\mathcal{H}^s(\R^3)})<\infty.
\end{equation*}
This completes the proof of Proposition \ref{PROP:aas}
and hence the proof of Theorem \ref{THM:GWP}.
\end{proof}

\begin{ackno}\rm
T.O.~was supported by the European Research Council (grant no.~637995 ``ProbDynDispEq'').

\end{ackno}



\end{document}